\DeclareTextSymbol{\cyrsftsn}{OT2}{126}
\DeclareTextSymbol{\textnumero}{OT2}{125}
\theoremstyle{definition}
\newtheorem{definition}{Definition}[section]
\newtheorem{theorem}{Theorem}[section]
\newtheorem{lemma}{Lemma}[section]
\newtheorem{corollary}{Corollary}[section]
\newtheorem{remark}{Remark}[section]
\newtheorem{example}{Example}[section]
\newtheorem{assumption}{Assumption}[section]
\newtheorem{notation}{Notation}[section]
\newtheorem{problem}{Problem}
\def\BibTeX{{\rm B\kern-.05em{\sc i\kern-.025em b}\kern-.08em
    T\kern-.1667em\lower.7ex\hbox{E}\kern-.125emX}}
\begin{document}
\pagenumbering{arabic}

\title{Time-Varying Distributed Optimization for A Class of Stochastic Multi-Agent Systems}
\author{Wan-ying Li, Nan-jing Huang
\thanks{Manuscript created January, 2025; This work was developed by the National Natural Science Foundation of China (12171339, 12471296). (Corresponding author: Nan-jing Huang)}
\thanks{The authors are with the Department of Mathematics, Sichuan University, Chengdu 610064, P.R. China (e-mails: liwanying\_scu@163.com (Wan-ying Li); njhuang@scu.edu.cn and nanjinghuang@hotmail.com (Nan-jing Huang)}}


\maketitle

\begin{abstract}
Distributed optimization problems have received much attention due to their privacy preservation, parallel computation, less communication, and strong robustness. This paper presents and studies the time-varying distributed optimization problem for a class of stochastic multi-agent systems for the first time. For this, we initially propose a protocol in the centralized case that allows the tracking error of the agent with respect to the optimal trajectory to be exponentially ultimately bounded in a mean-square sense by stochastic Lyapunov theory. We then generalize this to the distributed case. Therein, the global variable can be accurately estimated in a fixed-time by our proposed estimator. Based on this estimator, we design a new distributed protocol, and the results demonstrate that the tracking error of all agents with respect to the optimal trajectory is exponentially ultimately bound in a mean-square sense by stochastic Lyapunov theory. Finally, simulation experiments are conducted to validate the findings.
\end{abstract}

\begin{IEEEkeywords}
Stochastic multi-agent systems, distributed optimization, time-varying objective function.
\end{IEEEkeywords}

\section{Introduction}
\IEEEPARstart{R}{ecently}, multi-agent systems (MASs) have developed rapidly, in which the distributed control problem has received much attention, with the advantages of privacy preservation, parallel computation, less communication, and robustness. The difficulty in solving these problems is that only local information and limited information from neighboring agents can be used to reach a decision. Among them, distributed optimization problems are an important class of problems with widespread applications, such as smart grids \cite{2019-mao}, sensor networks \cite{2004-rabbat}, finance \cite{2021-bae}, etc.

Scholars have studied distributed optimization problems in depth from several aspects. A distributed optimization algorithm for minimizing a common cost function was first proposed by Tsitsiklis et al.\cite{1986-Tsi} Subsequently, scholars have provided frameworks for solving distributed optimization problems using different methods, such as subgradient methods \cite{2009-nedic}, zero-gradient-sum methods \cite{2012-lu}, etc.
Later, the works \cite{2016-lin,2016-yang,2018-wang} considered a class of distributed optimization problems with communication delays;
\cite{2015-wang,2022-liu,2023-yu} considered external disturbances and proposed corresponding protocols to address such distributed optimization problems.
Further, an event-triggered mechanism is proposed in \cite{2021-li,2021-song} to reduce the communication requirements from the perspective of reducing the communication cost and solve the distributed optimization problem.
Recently, \cite{2018-chen,2020-wang,2021-yu,2023-garg,2021-song} proposed protocols to improve the convergence speed, ensuring convergence in a finite/fixed time.
More results can be traced back to \cite{2010-wang,2018-nedic,2019-yang} and the references therein.

It is notable that the protocols referenced in the above works are all for solving distributed time-invariant optimization problems (TIV-OPs), where the objective functions does not depend on time.
Indeed, it is both natural and essential to consider distributed time-varying optimization problems (TV-OPs) because their objective functions adjust over time, which is more realistic.
So far, several studies considering the time-varying objective functions have been conducted based on different methods, such as the method based on sign function \cite{2016-rahili}, sliding mode control method \cite{2017-sun}, output regulation method \cite{2022-ding}, prediction-correction method \cite{2016-simon}, etc.
Subsequently, scholars have extended the distributed TV-OPs with inequality constraints \cite{2020-sun} and coupled equality constraint \cite{2024-yue}.
The rest of the research results in this area can be found in \cite{2017-ning,2024-zhu,2020-li,2024-li} and their references.

In the aforementioned studies, all MASs involved are described by ordinary differential equations (ODEs). However, in practical applications, the evolution of systems is inevitably subject to various stochastic disturbances and uncertainties, which significantly impact the system's stability and cannot be modeled by ODEs. Therefore, it is crucial and meaningful to incorporate stochastic factors into the study of distributed TV-OPs. In related researches, stochastic differential equations (SDEs) are commonly used to describe these stochastic disturbances and uncertainties, and such MASs are referred to as stochastic MASs (SMASs). As far as the author is aware, current researches on the SMASs are primarily focused on consensus \cite{2019-you,2024-tang}, with no relevant studies on distributed TV-OPs of the SMASs to date. Thus, it would be important and interesting to study the distributed TV-OPs based on the SMASs under some suitable conditions.

The present work is an attempt in this new direction. Compared to the MASs described by ODEs, the difficulty is the treatment of stochasticity, which leads to the failure of various existing time-varying optimization protocols. Instead, one needs to innovatively find some new protocols for ensuring the tracking error of the agents with respect to the optimal trajectory to be globally exponentially ultimately bound in a mean-square sense (ME-GEUB). The main contributions of this paper are four-fold. First, the centralized and distributed TV-OPs for the SMASs are proposed for the first time, which provide a broader practical context than the distributed TIV-OPs. Second, a proper protocol is proposed to solve the centralized TV-OPs of the SMASs by stochastic Lyapunov theory. Third, a global estimator is designed to deal with the treatment of stochasticity and we propose a new protocol to address the distributed TV-OPs of the SMASs by a new Lyapunov criteria. Last but not least, some numerical simulations are given to validate the findings.

The remainder that follows in this paper is structured below. The next section reviews some basic preliminaries. Subsequently, in Section \Romannum{3}, a centralized protocol is designed to deal with the centralized TV-OPs. The centralized TV-OPs are then extended to the distributed one and the corresponding distributed protocol is presented in Section \Romannum{4}. Some simulation experiments are provided in Section \Romannum{5} and then the conclusions of the paper are summarized in Section \Romannum{6}.

\section{Preliminaries}
\subsection{Graph theory and Notations}
\noindent Consider an interaction graph $\mathcal{G}$ with $N$ nodes with index set $\mathfrak{N}=\{1,2,\cdots,N\}$, where each node represents an agent. In the subsequent discussion, we will frequently use the two matrices of the interaction graph $\mathcal{G}$ for our analysis. One of them is the adjacency matrix $\mathcal{A} = [a_{ij}]_{n\times n}$ with $a_{ij} > 0$ if agent $i$ can receive the information from agent $j$; if not, $a_{ij} = 0$. Generally, we always set $a_{ii} = 0$. In addition, $\mathcal{G}$ is called an undirected graph if $a_{ij} = a_{ji}$, otherwise it is a directed graph. The other matrix we need to use is the Laplacian matrix ($\mathcal{L}$-matrix) of $\mathcal{G}$ denoted as $\mathcal{L} = [l_{ij}]_{n\times n}$ ($l_{ij}=-a_{ij}$ for $i \neq j$, $l_{ii} = \sum_{j=1}^{n}a_{ij}$). The main notations used subsequently are given by Table \ref{Notations}.
\begin{table}[h]
  \caption{Notations} \label{Notations}
  \begin{center}
  \begin{tabular}{|p{1.1cm}|p{6.9cm}|}
  \hline
  Notation &Meaning  \\
  \hline
  $\mathbb{R}$ / $\mathbb{R}_+$/$\mathbb{R}_{++}$   &   $(-\infty,\infty)$ / $[0,\infty)$ / $(0,\infty)$  \\
  \hline
  $\mathbb{R}^n$   & $n$-dimensional Euclidean space  \\
  \hline
  $\nabla f(t,x)$   &  the gradient of $f(t,x)$ at $x$ \\
  \hline
  $H_f(t,x)$   & the Hessian matrix of $f(t,x)$ at $x$ \\
  \hline
  $\nabla_{xt} f(t,x)$   & the partial derivative of $\nabla f(t,x)$ at $t$ \\
  \hline
  $\otimes$  & Kronecker product \\
  \hline
  $\lambda_{i}[A]$  & The $i$-th eigenvalue after ranking the eigenvalues of matrix $A$ from smallest to largest 
  \\
  \hline
  $\|x\|_p$  & the $p$-norm of vector $x\in \mathbb{R}^n$, $\|x\|_{p}=(\sum_{i=1}^{m}|x_{i}|^{p})^{\frac{1}{p}}$ \\
  \hline
  $\|A\|$  & the 2-norm of matrix $A\in \mathbb{R}^{n\times n}$, $\|A\|=\lambda_{\max}[A]$ \\
  \hline
  $\|A\|_F$  & the Frobenius-norm of $A\in \mathbb{R}^{n\times n}$, $\|A\|_F=\sqrt{\sum_{i,j=1}^{n}a_{ij}^2}=\sqrt{trace[A^TA]}$\\
  \hline
  $\mathcal{C}^2$& the class of functions $V(t,x)$ twice continuously differentiable with respect to $x$ and once continuously differentiable with respect to $t$ \\
  \hline
  $(\Omega, \mathcal{F}, P)$ & a complete probability space, where $\Omega$ is a sample space, $\mathcal{F}$ is a $\sigma$-field and $P$ is a probability measure.\\
  \hline
  \end{tabular}
  \end{center}
\end{table}
For $x=[x_1,x_2,\cdots,x_n]^T\in \mathbb{R}^n$ and $m>0$, $sig^m(x)=[sig^m(x_{1}), sig^m(x_{2}), \cdots , sig^m(x_{n})]^{T}$, where $sig^m(x_{i})=sign(x_{i})|x_{i}|^m$ and
$sign(x_{i}) =
\begin{cases}
  1, & \mbox{if $x_{i}>0$}; \\
  -1, & \mbox{if $x_{i}<0$}; \\
  0, & \mbox{if $x_{i}=0$}.
\end{cases}
$

\subsection{Basic knowledge}\noindent

\noindent Below we present several definitions and lemmas as preliminaries for the ensuing discussion.
\begin{definition}\label{def-graph}
For a directed graph $\mathcal{G}$ there are the following definitions:
\begin{enumerate}
\item{If there exists $\xi_i, \xi_j\in \mathbb{R}_{++}$ such that $\xi_ia_{ij}=\xi_ja_{ji}$, $\forall i, j \in \Gamma$, then $\mathcal{G}$ is said to be detail balanced in weights.}
\item{If there exists a path between any pair of nodes in $\mathcal{G}$, then $\mathcal{G}$ is said strongly connected.}
\end{enumerate}
\end{definition}
\begin{lemma}\cite{D2.2,L2.3}\label{lemma-eig}
For an undirected connected graph $\mathcal{G}$, its Laplacian matrix $\mathcal{L}$ is semipositive definite and has n non-negative eigevalues $0=\lambda_1\leq\lambda_2\leq\dots\leq\lambda_n$. Moreover, $\lambda_2$ and $\lambda_n$ satisfy
  \begin{equation*}
    \lambda_{2}[\mathcal{L}]=\min_{v\neq0, \textbf{1}^Tv=0}\frac{v^T\mathcal{L}v}{v^Tv}, \quad \lambda_{n}[\mathcal{L}]=\max_{v\neq0, \textbf{1}^Tv=0}\frac{v^T\mathcal{L}v}{v^Tv}.
  \end{equation*}
\end{lemma}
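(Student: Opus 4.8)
The plan is to establish the three assertions in turn: (i) $\mathcal{L}$ is symmetric and positive semidefinite; (ii) $0$ is its smallest eigenvalue; and (iii) the variational formulas for $\lambda_2$ and $\lambda_n$.

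First, since $\mathcal{G}$ is undirected we have $a_{ij}=a_{ji}$, so $\mathcal{L}=\mathcal{L}^T$ and the spectrum is real. For any $v=[v_1,\dots,v_n]^T\in\mathbb{R}^n$ I would expand the quadratic form using $l_{ij}=-a_{ij}$ for $i\neq j$ and $l_{ii}=\sum_{j}a_{ij}$, which yields the identity
\[
v^T\mathcal{L}v=\frac{1}{2}\sum_{i=1}^{n}\sum_{j=1}^{n}a_{ij}(v_i-v_j)^2\ge 0 ,
\]
so $\mathcal{L}$ is positive semidefinite and its eigenvalues are non-negative; order them $\lambda_1\le\lambda_2\le\dots\le\lambda_n$. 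Since every row of $\mathcal{L}$ sums to zero, $\mathcal{L}\textbf{1}=0$, so $\textbf{1}$ is an eigenvector with eigenvalue $0$, and together with non-negativity this forces $\lambda_1=0$. Connectivity of $\mathcal{G}$ enters here: the identity above shows that $v^T\mathcal{L}v=0$ implies $v_i=v_j$ whenever $a_{ij}>0$, and a connected graph propagates this equality along paths, so $v$ must be constant; hence $\ker\mathcal{L}=\mathrm{span}\{\textbf{1}\}$ and the zero eigenvalue is simple (so in fact $\lambda_2>0$).

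Next, for the two variational identities I would invoke the Courant--Fischer / Rayleigh--Ritz theorem for the symmetric matrix $\mathcal{L}$. Choose an orthonormal eigenbasis $u_1,\dots,u_n$ with $\mathcal{L}u_k=\lambda_k u_k$ and $u_1=\textbf{1}/\sqrt{n}$. The deflated form of Rayleigh--Ritz gives
\[
\lambda_2=\min\Bigl\{\tfrac{v^T\mathcal{L}v}{v^Tv}\ :\ v\neq 0,\ v\perp u_1\Bigr\},
\]
and since $v\perp u_1$ is equivalent to $\textbf{1}^Tv=0$, this is exactly the claimed expression for $\lambda_2$. For $\lambda_n$, the unconstrained Rayleigh quotient satisfies $\max_{v\neq 0} v^T\mathcal{L}v/(v^Tv)=\lambda_n$, attained at $u_n$; the only point needing attention is that restricting the maximum to the hyperplane $\{\textbf{1}^Tv=0\}$ does not decrease it. This holds because $\lambda_n\ge\lambda_2>0=\lambda_1$ whenever $\mathcal{G}$ has an edge, so $u_n$ corresponds to an eigenvalue different from $\lambda_1$ and is therefore orthogonal to $u_1=\textbf{1}/\sqrt{n}$, i.e. $u_n$ already lies in the hyperplane; hence the constrained maximum is again $\lambda_n$.

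The argument is essentially textbook, so there is no deep obstacle; the only genuinely load-bearing steps are the quadratic-form identity and the connectivity argument identifying $\ker\mathcal{L}$, since the reality and non-negativity of the spectrum and both min/max formulas then follow from the spectral theorem for symmetric matrices. I would therefore write out the identity and the deflated Courant--Fischer characterization explicitly and keep the remaining bookkeeping brief.
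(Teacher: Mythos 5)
Your proposal is correct: the quadratic-form identity $v^T\mathcal{L}v=\tfrac{1}{2}\sum_{i,j}a_{ij}(v_i-v_j)^2$, the connectivity argument showing $\ker\mathcal{L}=\mathrm{span}\{\mathbf{1}\}$, and the deflated Courant--Fischer characterizations are exactly the standard route. The paper does not prove this lemma itself but cites it to the references, and your argument is the same textbook proof found there, so there is nothing further to reconcile.
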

\begin{definition}\cite{convex}\label{def-Sconvex}
  A $\mathcal{C}^2$-function $f(t,x)$ is called strongly convex of $x$, if there exists a constant $L>0$ such that 
  for any $x,y\in \mathbb{R}^n$, we have
  \[ f(t,y) - f(t,x)-\langle \nabla f(t,x), y-x\rangle \geq \frac{L}{2}\|y-x\|^2.\]
\end{definition}
\begin{lemma}\cite{2018-2}\label{lemma-min}
Suppose $f(t,x)$ is a $\mathcal{C}^2$-function satisfying $\nabla f(\bar{t},\bar{x})=0$ and $\nabla^2 f(\bar{t},\bar{x})\succ 0$. If $\bar{x}$ is an isolated strict local minimizer of $f(t,x)$ at time $\bar{t} \in \mathbb{R}_+$, then there exists a $C^1$-function $x:B_{\varepsilon}(\bar{t})\rightarrow \mathbb{R}^n$ where $B_{\varepsilon}(\bar{t})$ is a neighborhood of $\bar{t}$ $(\varepsilon>0)$, such that $x(\bar{t})=\bar{x}$ and $x(t)$ is an isolated strict local minimizer of $f(t,x)$ for any $t\in B_{\varepsilon}(\bar{t})$.
\end{lemma}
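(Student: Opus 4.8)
\emph{Proof plan.} The plan is to recognize the statement as a direct consequence of the implicit function theorem applied to the gradient map $x\mapsto\nabla f(t,x)$, supplemented by a continuity argument that propagates the second-order condition along the resulting curve. First I would set $F(t,x):=\nabla f(t,x)$, which is a continuously differentiable map $\mathbb{R}_+\times\mathbb{R}^n\to\mathbb{R}^n$ because $f\in\mathcal{C}^2$. By assumption $F(\bar t,\bar x)=0$, and the partial Jacobian $\partial F/\partial x$ at $(\bar t,\bar x)$ is exactly $H_f(\bar t,\bar x)=\nabla^2 f(\bar t,\bar x)$, which is positive definite and hence invertible. The implicit function theorem then produces $\varepsilon>0$, a neighborhood $B_\varepsilon(\bar t)$ of $\bar t$, and a unique $C^1$-function $x:B_\varepsilon(\bar t)\to\mathbb{R}^n$ with $x(\bar t)=\bar x$ and $\nabla f(t,x(t))\equiv 0$ on $B_\varepsilon(\bar t)$. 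This settles first-order stationarity of $x(t)$ together with the continuity and differentiability claims.

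Next I would upgrade each stationary point $x(t)$ to an isolated strict local minimizer. The composite $t\mapsto H_f(t,x(t))$ is continuous on $B_\varepsilon(\bar t)$, being the Hessian of $f$ (itself continuous) evaluated along the $C^1$ curve $x(\cdot)$; consequently $t\mapsto\lambda_{\min}\big[H_f(t,x(t))\big]$ is continuous and strictly positive at $\bar t$. Shrinking $\varepsilon$ if necessary, we obtain $H_f(t,x(t))\succ 0$ for all $t\in B_\varepsilon(\bar t)$. By continuity of $H_f$ one can then find, for each such $t$, a ball around $x(t)$ on which $H_f(t,\cdot)$ stays positive definite; on that ball $f(t,\cdot)$ is strictly convex, so $x(t)$ is the unique minimizer there, i.e. an isolated strict local minimizer. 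Since this holds for every $t\in B_\varepsilon(\bar t)$, the conclusion follows.

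The only genuinely delicate part is the bookkeeping of the successive neighborhood shrinkages: the final $\varepsilon$ must be small enough that simultaneously (i) the implicit function $x(\cdot)$ exists and is $C^1$, (ii) the curve stays inside the region where the Hessian is positive definite, and (iii) the isolation radius around $x(t)$ can be chosen so that the local-minimizer property is not vacuous. Each of these is an open condition holding at $t=\bar t$, so a finite intersection of the corresponding neighborhoods does the job; beyond this routine argument I do not anticipate any real obstacle, since all the analytic content is carried by the implicit function theorem and the continuity of the eigenvalues of $H_f$.
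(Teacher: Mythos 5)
The paper offers no proof of this lemma at all: it is imported verbatim from the cited reference (Still, \emph{Lectures on Parametric Optimization}), so there is no in-paper argument to compare against. Your proposal is the standard and correct proof of this sensitivity result: apply the implicit function theorem to $F(t,x)=\nabla f(t,x)$ at $(\bar t,\bar x)$, using invertibility of $\nabla^2 f(\bar t,\bar x)$ to obtain a $C^1$ curve of stationary points, and then use continuity of $t\mapsto\lambda_{\min}\bigl[H_f(t,x(t))\bigr]$ to propagate positive definiteness (hence local strict convexity on a small ball, hence the isolated strict local minimizer property) to all $t$ in a possibly shrunken neighborhood. Your handling of the nested neighborhood shrinkages is also right, since each condition is open. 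Two small remarks: first, the hypothesis that $\bar x$ is an isolated strict local minimizer at $\bar t$ is already implied by the stated second-order conditions, and your argument correctly never needs it; second, for the $C^1$ conclusion on $x(\cdot)$ the implicit function theorem needs $\nabla f$ to be differentiable in $t$ as well, i.e.\ existence and continuity of $\nabla_{xt}f$, which the paper's definition of the class $\mathcal{C}^2$ does not literally guarantee but which is assumed implicitly throughout (the quantity $\nabla_{xt}f$ appears in the protocols themselves), so this is a regularity convention rather than a gap in your argument.
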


\begin{lemma}\cite{L2.2,L2.2+1,fix4}\label{lemma-inequality}
The following inequalities hold:
\begin{enumerate}
\item{for $x \in \mathbb{R}^n$, $\|x\|_q \leq \|x\|_p \leq n^{{\frac{1}{p}}-\frac{1}{q}}\|x\|_q$, where $q>p>0$;}
\item{for $A, B\in \mathbb{R}^{n\times n}$, $\frac{1}{\sqrt{n}}\|A\|_1\leq \|A\| \leq \sqrt{n}\|A\|_1$, $\|A\|\leq \|A\|_F \leq \sqrt{n}\|A\|$, $\|AB\|\leq\|A\|\cdot\|B\|$;}
\item{for $x_i\in \mathbb{R}_+$, one has
\begin{align*}
&\left(\sum_{i\in \mathfrak{N}}x_{i}\right)^{k} \leq \sum_{i\in \mathfrak{N}}x_{i}^{k}\leq N^{1-k}\left(\sum_{i\in \mathfrak{N}}x_{i}\right)^{k}\;(0<k<1),\\
&\sum_{i\in \mathfrak{N}}x_{i}^{k}\leq \left(\sum_{i\in \mathfrak{N}}x_{i}\right)^{k}\leq N^{k-1}\sum_{i\in \mathfrak{N}}x_{i}^{k}\;(k\geq1).
\end{align*}}
\end{enumerate}
\end{lemma}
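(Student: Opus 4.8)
The plan is to verify the three items by elementary means, reducing everything to H\"older's inequality, the super/subadditivity of power functions, and the standard equivalence of matrix norms on the finite-dimensional space $\mathbb{R}^{n\times n}$; alternatively one may simply invoke \cite{L2.2,L2.2+1,fix4}, since all three are classical.

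For item (1), I would first establish $\|x\|_q \le \|x\|_p$ by homogeneity: after rescaling so that $\|x\|_p = 1$, every coordinate obeys $|x_i|\le 1$, hence $|x_i|^q \le |x_i|^p$ because $q>p$, and summing gives $\|x\|_q^q \le \|x\|_p^p = 1$. For the companion bound, apply H\"older to $\sum_{i}|x_i|^p\cdot 1$ with conjugate exponents $q/p$ and $q/(q-p)$; this yields $\|x\|_p^p \le \big(\sum_i |x_i|^q\big)^{p/q}\, n^{(q-p)/q}$, and taking $p$-th roots produces exactly the constant $n^{1/p-1/q}$. (If $p<1$ these are quasi-norm estimates, but the manipulations are identical.)

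For item (2), submultiplicativity $\|AB\|\le\|A\|\,\|B\|$ is immediate from $\|A\| = \sup_{\|x\|=1}\|Ax\|$. Writing the singular values of $A$ as $\sigma_1\ge\cdots\ge\sigma_n\ge 0$, we get $\|A\| = \sigma_1 \le \big(\sum_i \sigma_i^2\big)^{1/2} = \|A\|_F \le (n\sigma_1^2)^{1/2} = \sqrt n\,\|A\|$, which gives the Frobenius bounds. For the induced $1$-norm (maximum absolute column sum): $\|A\|_1 \le \sqrt n\,\|A\|$ follows by applying item (1) with $p=1,\,q=2$ to each column $Ae_j$ and then using $\|Ae_j\|_2\le\|A\|$, which gives the left inequality $\tfrac1{\sqrt n}\|A\|_1 \le \|A\|$; and the estimate $\|A\| \le (\|A\|_1\|A\|_\infty)^{1/2}$ combined with the crude bound $\|A\|_\infty \le n\|A\|_1$ yields $\|A\|\le\sqrt n\,\|A\|_1$.

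Item (3) is again H\"older plus convexity. Normalizing $S=\sum_i x_i$, each $x_i/S\in[0,1]$, so $(x_i/S)^k \ge x_i/S$ when $0<k<1$ and $(x_i/S)^k \le x_i/S$ when $k\ge1$; summing gives the left inequalities $\sum_i x_i^k \ge S^k$ and $\sum_i x_i^k \le S^k$ respectively. The right inequalities are the power-mean inequality applied to the concave ($0<k<1$) or convex ($k\ge1$) map $t\mapsto t^k$: comparing $\big(\tfrac1N\sum_i x_i\big)^k$ with $\tfrac1N\sum_i x_i^k$ one reads off the factors $N^{1-k}$ and $N^{k-1}$; equivalently the $0<k<1$ upper bound is H\"older with exponents $1/k$ and $1/(1-k)$ applied to $\sum_i x_i^k\cdot 1$. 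No step is a genuine obstacle here---the only care needed is the bookkeeping of exponents and constants, plus noting the edge case $p<1$ in item (1) and reading $\|A\|$ as the spectral norm (largest singular value) in item (2).
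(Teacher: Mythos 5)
Your proof is correct, but note that the paper itself gives no proof of this lemma at all: it is stated as a known result and delegated entirely to the cited references (Hardy--Littlewood--P\'olya for the power-sum and $p$-norm comparisons, Golub--Van Loan for the matrix-norm equivalences). So there is no argument in the paper to compare against; what you have supplied is a self-contained elementary derivation of facts the authors treat as standard. Your individual steps all check out: the normalization $\|x\|_p=1$ plus H\"older with exponents $q/p$ and $q/(q-p)$ gives item (1) with exactly the constant $n^{1/p-1/q}$, and your parenthetical about the quasi-norm regime $p<1$ is the right caveat (H\"older is applied with exponent $q/p>1$, so nothing breaks). For item (2), the singular-value argument for the Frobenius bounds, the chain $\|A\|_1\le\sqrt n\,\|A\|$ via the columnwise use of item (1), and $\|A\|\le(\|A\|_1\|A\|_\infty)^{1/2}\le\sqrt n\,\|A\|_1$ using the entrywise bound $\|A\|_\infty\le n\|A\|_1$ are all valid; you correctly read $\|A\|$ as the spectral norm even though the paper's notation table misstates it as $\lambda_{\max}[A]$. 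Item (3) via normalization by $S=\sum_i x_i$ (with the trivial case $S=0$ implicitly handled) together with Jensen for $t\mapsto t^k$ is exactly the standard route. The only thing your write-up buys beyond the paper is self-containedness; the only thing the paper's citation buys is brevity.
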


\begin{lemma}\cite{Matrix-Analysis}\label{weyl}(Weyl)
  Let $A,B\in\mathbb{R}^{n\times n}$ be positive definite symmetric and let the respective eigenvalues of $A$, $B$ and $A+B$ be $\{\lambda_i[A]\}_{i=1}^{n}$, $\{\lambda_i[B]\}_{i=1}^{n}$ and $\{\lambda_i[A+B]\}_{i=1}^{n}$ (the sequence has been ordered from the smallest to the largest).
  Then
\[\lambda_i[A+B]\leq\lambda_{i+j}[A]+\lambda_{n-j}[B], \, j=0,1,\ldots,n-i,\]
for each $i=0,1,\ldots,n$. For some pair $i,j$, the condition for the equality to hold is that there is a nonzero vector $x$ such that $Ax=\lambda_{i-j+1}[A]x$, $Bx=\lambda_{j}[B]x$ and $(A+B)x=\lambda_{i}[A+B]x$. Otherwise, the above inequality is a strict inequality. Also
\[\lambda_{i-j+1}[A]+\lambda_{j}[B]\leq\lambda_{i}[A+B], \, j=1,\ldots,i,\]
for each $i=0,1,\ldots,n$. The conditions under which the equality holds or not are the same as above. 
\end{lemma}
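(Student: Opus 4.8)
The plan is to deduce both families of inequalities from the Courant--Fischer min--max characterization of the eigenvalues of a symmetric matrix, together with the elementary fact that $\dim(\mathcal{S}\cap\mathcal{T})\ge\dim\mathcal{S}+\dim\mathcal{T}-n$ for subspaces $\mathcal{S},\mathcal{T}\subseteq\mathbb{R}^n$; the equality criteria are then read off by examining when each estimate in the derivation is tight. (Positive definiteness is not actually needed for the inequalities themselves, but we stay in the stated setting.)

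First I would fix orthonormal eigenbases $\{u_k\}_{k=1}^{n}$ of $A$ and $\{w_k\}_{k=1}^{n}$ of $B$ with $Au_k=\lambda_k[A]u_k$ and $Bw_k=\lambda_k[B]w_k$, both ordered increasingly. For the upper estimate $\lambda_i[A+B]\le\lambda_{i+j}[A]+\lambda_{n-j}[B]$, I would set $\mathcal{S}_A=\mathrm{span}\{u_1,\dots,u_{i+j}\}$ and $\mathcal{S}_B=\mathrm{span}\{w_1,\dots,w_{n-j}\}$. Since $\dim\mathcal{S}_A+\dim\mathcal{S}_B=n+i>n$, the intersection $\mathcal{S}_A\cap\mathcal{S}_B$ contains an $i$-dimensional subspace $\mathcal{T}$. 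As $\mathcal{S}_A$ is spanned by eigenvectors of $A$ with eigenvalue at most $\lambda_{i+j}[A]$, one has $x^{T}Ax\le\lambda_{i+j}[A]\,x^{T}x$ on $\mathcal{S}_A$, and similarly $x^{T}Bx\le\lambda_{n-j}[B]\,x^{T}x$ on $\mathcal{S}_B$; hence $x^{T}(A+B)x\le(\lambda_{i+j}[A]+\lambda_{n-j}[B])\,x^{T}x$ for every $x\in\mathcal{T}$. Combined with Courant--Fischer this gives
\[
\lambda_i[A+B]=\min_{\dim\mathcal{V}=i}\ \max_{0\neq x\in\mathcal{V}}\frac{x^{T}(A+B)x}{x^{T}x}\ \le\ \max_{0\neq x\in\mathcal{T}}\frac{x^{T}(A+B)x}{x^{T}x}\ \le\ \lambda_{i+j}[A]+\lambda_{n-j}[B].
\]
The lower estimate $\lambda_{i-j+1}[A]+\lambda_j[B]\le\lambda_i[A+B]$ then follows either by applying the upper estimate to $-A$ and $-B$ and using $\lambda_k[-M]=-\lambda_{n+1-k}[M]$, or by repeating the argument with the dual ``$\max$--$\min$'' form of Courant--Fischer and the subspaces spanned by the \emph{largest} eigenvectors.

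For the equality conditions I would run the argument backwards. Fixing any $i$-dimensional $\mathcal{T}\subseteq\mathcal{S}_A\cap\mathcal{S}_B$, equality in the chain above forces a unit vector $y\in\mathcal{T}$ with $y^{T}(A+B)y=\lambda_{i+j}[A]+\lambda_{n-j}[B]$; since $y\in\mathcal{S}_A$ and $y\in\mathcal{S}_B$ give $y^{T}Ay\le\lambda_{i+j}[A]$ and $y^{T}By\le\lambda_{n-j}[B]$, both inequalities must hold with equality. Expanding $y$ in the eigenbasis of $A$ and using that a Rayleigh quotient attains an endpoint of its spectrum only on the corresponding eigenspace yields $Ay=\lambda_{i+j}[A]y$, and likewise $By=\lambda_{n-j}[B]y$, whence $(A+B)y=\lambda_i[A+B]y$; conversely, the existence of such a joint eigenvector makes the inequality tight by simply adding the two eigenrelations. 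The analogous discussion handles the second chain with the indices $\lambda_{i-j+1}[A]$ and $\lambda_j[B]$. The step I expect to be the main obstacle is precisely this equality analysis: one must show that tightness in the \emph{composed} minimax --- not merely in an isolated Rayleigh-quotient estimate --- forces a genuine common eigenvector, which requires keeping track of the minimizing subspace in Courant--Fischer and treating with some care the cases in which the relevant eigenvalues of $A$ or $B$ are repeated.
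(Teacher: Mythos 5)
The paper does not prove this lemma at all: it is quoted verbatim as a classical result (Weyl's inequality) with a citation to Horn and Johnson's \emph{Matrix Analysis}, so there is no in-paper argument to compare yours against. Judged on its own, your sketch is the standard textbook proof and is essentially correct: the dimension count $\dim(\mathcal{S}_A\cap\mathcal{S}_B)\geq (i+j)+(n-j)-n=i$ combined with the Courant--Fischer min--max characterization gives the upper family of inequalities, and the lower family follows by the substitution $A\mapsto -A$, $B\mapsto -B$ with $\lambda_k[-M]=-\lambda_{n+1-k}[M]$ (a quick index check confirms this recovers $\lambda_{i-j+1}[A]+\lambda_j[B]\leq\lambda_i[A+B]$ for $j=1,\dots,i$). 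You are also right that positive definiteness is irrelevant; symmetry alone suffices. Your flagged concern about the equality analysis is the genuinely delicate part, and your treatment is the right one: tightness of the composed bound forces a unit vector $y$ in the intersection on which both Rayleigh quotients attain their extremes, hence $y$ lies in the corresponding eigenspaces of $A$ and $B$ simultaneously. One small point worth noting: for the first inequality your equality analysis correctly produces a common eigenvector with $Ay=\lambda_{i+j}[A]y$ and $By=\lambda_{n-j}[B]y$, whereas the lemma as transcribed in the paper attaches the indices $\lambda_{i-j+1}[A]$ and $\lambda_j[B]$ to that case --- those indices belong to the second inequality, so your version is the internally consistent one.
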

\begin{corollary}
  Let $A,B\in\mathbb{R}^{n\times n}$ be positive definite symmetric. Then
  \[\lambda_i[A]+\lambda_1[B]\leq\lambda_i[A+B]\leq\lambda_i[A]+\lambda_n[B], \, i=0,1,\ldots,n.\]
  It can be further obtained that
  \[\lambda_1[A+B]\geq\lambda_1[A]+\lambda_1[B], \, \lambda_n[A+B]\leq\lambda_n[A]+\lambda_n[B].\]
\end{corollary}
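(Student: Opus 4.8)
The plan is to obtain the Corollary as an immediate specialization of Weyl's inequalities (Lemma \ref{weyl}), choosing the free index $j$ appropriately in each of the two inequality chains. Since $A$ and $B$ are symmetric positive definite, so is $A+B$, and hence all the eigenvalues in question are real and positive and the ascending orderings assumed in Lemma \ref{weyl} are legitimate; this is the only thing to check before applying the lemma.

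First I would derive the upper bound. In the first family $\lambda_i[A+B]\leq\lambda_{i+j}[A]+\lambda_{n-j}[B]$, valid for $j=0,1,\ldots,n-i$, I would set $j=0$ to get $\lambda_i[A+B]\leq\lambda_i[A]+\lambda_n[B]$ for every $i$. Symmetrically, for the lower bound I would use the second family $\lambda_{i-j+1}[A]+\lambda_j[B]\leq\lambda_i[A+B]$, valid for $j=1,\ldots,i$, and set $j=1$, which gives $\lambda_i[A]+\lambda_1[B]\leq\lambda_i[A+B]$. Chaining these two estimates yields the displayed two-sided bound $\lambda_i[A]+\lambda_1[B]\leq\lambda_i[A+B]\leq\lambda_i[A]+\lambda_n[B]$.

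The two ``further'' consequences then come from evaluating at the extreme indices: putting $i=1$ in the lower bound gives $\lambda_1[A+B]\geq\lambda_1[A]+\lambda_1[B]$, and putting $i=n$ in the upper bound gives $\lambda_n[A+B]\leq\lambda_n[A]+\lambda_n[B]$. The only minor bookkeeping point is the index range: although the statement writes $i=0,1,\ldots,n$, the substantive range is $i=1,\ldots,n$ (there is no $\lambda_0$), so I would restrict to $i\in\{1,\ldots,n\}$ when invoking Lemma \ref{weyl}. There is no real obstacle here — the proof is purely a matter of selecting $j$ so that the Weyl bounds collapse onto the single index appearing in the Corollary.
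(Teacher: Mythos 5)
Your proof is correct and is exactly the intended derivation: the paper states this corollary without proof as an immediate specialization of Lemma~\ref{weyl}, obtained by taking $j=0$ in the first Weyl family and $j=1$ in the second, then evaluating at $i=1$ and $i=n$. Your remark that the index range should read $i=1,\ldots,n$ (there is no $\lambda_0$) correctly identifies a typo carried over from the paper's statement of the lemma.
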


\subsection{Stability}
\noindent This subsection reviews some definitions and theorems of stability.
We first consider the following MASs
\begin{equation}\label{MAS}
\dot{x}_i(t)=\phi(t,x_i(t)), \quad x_i(0)=x_{i0}.
\end{equation}
\begin{definition}\cite{2017-ning}\label{def-FXC}
The fixed-time consensus is said to be achieved, if there exists $T\in \mathbb{R}_{++}$ such that, for all $i,j\in \mathfrak{N}$ $\lim_{t\rightarrow T}\|x_{i}(t)-x_{j}(t)\|=0$ and $\|x_{i}(t)-x_{j}(t)\|=0$ hold for all $t\geq T$.
\end{definition}
\begin{lemma}\cite{D2.4,fix8}\label{lemma-FXC}
For a system \eqref{MAS}, if a Lyapunov function $V\in \mathcal{C}^2$ is continuous radially unbounded and any solution $x(t)$ satisfied $\dot{V}(t,x)\leq -k_1V(t,x)^{p}-k_2V(t,x)^{q}$, where $k_1, k_2\in\mathbb{R}_
  {++}$, $p\in (0,1)$ and $q\in (1, +\infty)$. Then fixed-time stability is said to be reached with the settling time $T\leq T_{\max}=\frac{1}{k_1(1-p)}+\frac{1}{k_2(q-1)}$.
\end{lemma}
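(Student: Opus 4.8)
The statement to be proved is Lemma~\ref{lemma-FXC}, the classical Polyakov-type fixed-time stability criterion, so the plan is to convert the differential inequality on $V$ into two one-dimensional comparison estimates and add the resulting time bounds. First I would observe that, since $k_1,k_2>0$ and $V\ge 0$, the hypothesis gives $\dot V(t,x(t))\le -k_1V^p-k_2V^q\le 0$ along any solution, so $t\mapsto V(t,x(t))$ is nonincreasing and bounded by $V(0,x_0)$; combined with the radial unboundedness of $V$ this keeps $x(t)$ in a compact set and guarantees the solution is defined for all $t\ge 0$, so the settling-time statement makes sense. I would also record the elementary fact that once $V(t_\star,x(t_\star))=0$ the inequality $\dot V\le 0$ together with $V\ge 0$ forces $V(t,x(t))=0$ for all $t\ge t_\star$, which is what lets us talk about a true settling time rather than mere asymptotic decay.

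Next I would split the evolution at the level set $\{V=1\}$. On the time interval where $V(t,x(t))\ge 1$ we have $V^q\ge V^p$ and in particular $\dot V\le -k_2V^q$; differentiating $W:=V^{1-q}$ (well defined and in $(0,1]$ there, with $1-q<0$) yields
\[
\frac{d}{dt}W=(1-q)V^{-q}\dot V\ \ge\ (1-q)V^{-q}(-k_2V^q)=(q-1)k_2>0,
\]
so $W$ increases at rate at least $(q-1)k_2$ from $W(0)\le 1$ up to the value $1$ (i.e.\ until $V=1$). Hence the time $t_1$ needed to reach $V\le 1$ satisfies $t_1\le \dfrac{1-W(0)}{(q-1)k_2}\le \dfrac{1}{k_2(q-1)}$. (If $V(0,x_0)\le 1$ already, this phase is vacuous, $t_1=0$, and the bound still holds.)

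For $t\ge t_1$ we have $V(t,x(t))\le 1$, hence $V^p\ge V^q$ and $\dot V\le -k_1V^p$. Now set $U:=V^{1-p}$ with $1-p\in(0,1)$; while $V>0$,
\[
\frac{d}{dt}U=(1-p)V^{-p}\dot V\ \le\ (1-p)V^{-p}(-k_1V^p)=-k_1(1-p)<0,
\]
so $U$ decreases at rate at least $k_1(1-p)$ from $U(t_1)\le 1$ and must hit $0$ (equivalently $V=0$) within an additional time at most $\dfrac{1}{k_1(1-p)}$. Adding the two phases, $V(t,x(t))=0$ for all $t\ge T:=t_1+\dfrac{1}{k_1(1-p)}\le \dfrac{1}{k_2(q-1)}+\dfrac{1}{k_1(1-p)}=T_{\max}$, and by the invariance noted above $V$ stays at $0$ thereafter; radial unboundedness then translates $V\equiv 0$ into the desired fixed-time stability. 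The only point requiring a little care is the justification of the comparison steps: rather than invoking a comparison lemma that presumes Lipschitz data, I would argue directly via the (absolutely continuous, hence a.e.\ differentiable) scalar function $t\mapsto V(t,x(t))$ and integrate the inequalities for $W$ and $U$ over the relevant intervals, which is the main—though still routine—obstacle. I expect no essential difficulty beyond this bookkeeping.
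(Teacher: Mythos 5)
The paper states Lemma~\ref{lemma-FXC} as a known result cited from the fixed-time stability literature and gives no proof of its own, so there is nothing to compare against line by line. Your two-phase comparison argument (bounding the time for $V$ to drop below the level set $\{V=1\}$ via $W=V^{1-q}$, then the time to reach zero via $U=V^{1-p}$, and summing the two bounds) is the standard proof of this Polyakov-type criterion and is correct, including the needed remarks on forward invariance of $\{V=0\}$ and on handling the case $V(0,x_0)\le 1$.
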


We also consider the following stochastic differential equation
\begin{equation}
d x_t = b(t,x_t) dt + \sigma(t,x_t)dB(t),\quad
x_0 = s_0,
\label{SDE}
\end{equation}
where $x: \mathbb{R}_+\times\Omega\rightarrow \mathbb{R}^n$ is the state denoted as $x_t$, $s_0: \Omega\rightarrow \mathbb{R}^n$ is the initial state with $\mathbb{E}(\|s_0\|^2)<\infty$, $b:\mathbb{R}_+\times \mathbb{R}^n \rightarrow \mathbb{R}^n$, $\sigma:\mathbb{R}_+ \times \mathbb{R}^n \rightarrow \mathbb{R}^{n\times n}$, and $B(t)$ is an $n$-dimensional Brownian motion defined on $(\Omega, \mathcal{F}, P)$.

\begin{definition}\cite{SSDE}\label{def-LV}
  For any $V\in \mathcal{C}^2$ and the system \eqref{SDE}, the differential operator $\mathcal{L}$ acting on $V$ is defined as
  \begin{align*}
  \mathcal{L}V(t,x) = &\frac{\partial V(t,x)}{\partial t}+ \frac{\partial V(t,x)}{\partial x}b(t,x)\nonumber\\
  &+\frac{1}{2}trace\left[\sigma^T(t,x)\frac{\partial^2 V(t,x)}{\partial x^2}\sigma(t,x)\right].\nonumber
  \end{align*}
\end{definition}
\begin{lemma}\cite{2001-deng}\label{lemma-bound}
  There exists a positive definite function $V\in \mathcal{C}^2$, $k_1,k_2\in\mathbb{R}_
  {++}$, and class $\mathcal{K}_{\infty}$ function $\bar{\alpha}_1,\,\bar{\alpha}_2$ such that $\bar{\alpha}_1(|x|)\leq V(x)\leq \bar{\alpha}_2(|x|)$ and $\mathcal{L}V(x)\leq -k_1 V(x)+k_2$ for all $x\in\mathbb{R}^n$, $t>t_0$. Then, for each $x_0\in\mathbb{R}^n$, one gets
   \begin{enumerate}
\item{the system \eqref{SDE} has a unique strong solution;}
\item{$\mathbb{E}[V(x)]\leq V(x_0)\exp\{-k_1 t\}+k_2/k_1, \, \forall t>0$.}
\end{enumerate}

\end{lemma}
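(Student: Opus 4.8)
The plan is to handle the two conclusions separately: the moment estimate (2) follows from It\^o's formula applied to an exponentially reweighted Lyapunov function, and the global existence in (1) comes from reading the Lyapunov inequality as a non‑explosion certificate. Throughout I assume the standing local Lipschitz regularity of $b$ and $\sigma$, so that \eqref{SDE} has a unique \emph{local} strong solution defined up to an explosion time $\tau_\infty=\lim_{R\to\infty}\tau_R$, where $\tau_R=\inf\{t\ge 0:\ |x_t|\ge R\}$.

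First I would prove non‑explosion, i.e.\ $\tau_\infty=\infty$ almost surely. Applying the operator $\mathcal{L}$ of Definition~\ref{def-LV} to $V$ and using the hypothesis $\mathcal{L}V(x)\le -k_1V(x)+k_2\le k_2$, Dynkin's formula on the stopped process $x_{t\wedge\tau_R}$ gives $\mathbb{E}\bigl[V(x_{t\wedge\tau_R})\bigr]\le V(x_0)+k_2 t$. Since $\bar\alpha_1(|x|)\le V(x)$ with $\bar\alpha_1$ of class $\mathcal{K}_\infty$, on the event $\{\tau_R\le t\}$ one has $V(x_{t\wedge\tau_R})\ge \bar\alpha_1(R)$, so $\mathbb{P}(\tau_R\le t)\le \bigl(V(x_0)+k_2 t\bigr)/\bar\alpha_1(R)\to 0$ as $R\to\infty$; hence $\mathbb{P}(\tau_\infty\le t)=0$ for every $t$, i.e.\ the solution is global and unique. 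This is the classical Khasminskii argument, and it is the place where the lower bound $\bar\alpha_1$ (radial unboundedness of $V$) is used.

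For the moment bound I would apply It\^o's formula to $t\mapsto e^{k_1 t}V(x_t)$. Using $\mathcal{L}$ from Definition~\ref{def-LV},
\[
d\bigl(e^{k_1 t}V(x_t)\bigr)=e^{k_1 t}\bigl(k_1V(x_t)+\mathcal{L}V(x_t)\bigr)\,dt+e^{k_1 t}\frac{\partial V}{\partial x}(x_t)\sigma(t,x_t)\,dB(t),
\]
and the hypothesis $\mathcal{L}V\le -k_1V+k_2$ bounds the drift term by $e^{k_1 t}k_2\,dt$. Integrating over $[0,t\wedge\tau_R]$ and taking expectations annihilates the stochastic integral — once stopped at $\tau_R$ its integrand is bounded on $\{|x|\le R\}$, so it is a genuine martingale with zero mean — yielding $\mathbb{E}\bigl[e^{k_1(t\wedge\tau_R)}V(x_{t\wedge\tau_R})\bigr]\le V(x_0)+\tfrac{k_2}{k_1}\bigl(e^{k_1 t}-1\bigr)$. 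Letting $R\to\infty$, using $\tau_R\uparrow\infty$ a.s.\ from the previous step together with Fatou's lemma, gives $\mathbb{E}[e^{k_1 t}V(x_t)]\le V(x_0)+\tfrac{k_2}{k_1}(e^{k_1 t}-1)$; dividing by $e^{k_1 t}$ and discarding the nonpositive term $-\tfrac{k_2}{k_1}e^{-k_1 t}$ produces exactly conclusion (2).

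The only delicate point — and the reason the two parts must be proved in this order — is the treatment of the It\^o term: it is a priori merely a local martingale, so the expectation identity is legitimate only after localizing by $\tau_R$, and the limit $R\to\infty$ recovers the global estimate precisely because the Lyapunov inequality has already been used to force $\tau_R\uparrow\infty$ almost surely. Everything else (the computation of $\mathcal{L}(e^{k_1 t}V)$, the comparison with $k_2$, and the closing algebra) is routine.
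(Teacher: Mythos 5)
The paper does not prove this lemma; it is quoted directly from the cited reference \cite{2001-deng} and used as a black box. Your argument is correct and is essentially the standard proof of that result: the Khasminskii localization via $\tau_R$ and the bound $\mathcal{L}V\le k_2$ to rule out explosion, followed by It\^o's formula on $e^{k_1 t}V(x_t)$ with Fatou's lemma to pass to the limit in $R$ — including the correct observation that non-explosion must be established first so that the localized martingale argument can be de-localized.
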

\begin{definition}\cite{2023-min}\label{def-Sfx}
  The solution $x(t;x_0)$ of the system \eqref{SDE} is said to be practically fixed-time stability (Pfxs) in probability, if
\begin{enumerate}
\item{the system \eqref{SDE} has a unique solution for all initial state $ x_0\in \mathbb{R}^n$;}
\item{for every initial state $x_0\in \mathbb{R}^n\setminus \{0\}$, the settling time $\tau=\inf\{t: x(t;x_0)\in \{0\}\}$ is the first time $\{0\}$ is reached and it satisfies $\mathbb{E}(\tau)\leq T$. In particular, $T$ does not depend on the initial value;}
\item{for a constant $\epsilon_0>0$, $\mathbb{E}(\|x(t;x_0)\|)\leq\epsilon_0$ holds for all $t\geq\tau$.}
\end{enumerate}
\end{definition}

\begin{lemma}\cite{2023-min}\label{lemma-Sfxs}
  For the system \eqref{SDE}, suppose $b(t,x), \, \sigma(t,x)$ satisfy the monotone consition and are locally Lipschitz continuous in $x$. If there exists a continuous radially unbounded $V\in\mathcal{C}^2$ and constants $k_1,k_2\in\mathbb{R}_
  {++},\,p\in(0,1),\,q\in(1,+\infty), \,\triangle>0$ such that
  \[\mathcal{L}V(x)\leq -k_1V(x)^{p}-k_2V(x)^{q}+\triangle, \, \forall x\in\mathbb{R}^n,\]
  then the solution of system \eqref{SDE} is Pfxs in probability and the stochastic settling time $\tau = \inf \{t:\mathbb{E}(V(x))\leq \delta\}$ satisfies
  \[\mathbb{E}(\tau)\leq \frac{(\bar{k}_1/\bar{k}_2)^{\frac{1-p}{q-p}}}{\bar{k}_1(1-p)}+ \frac{(\bar{k}_1/\bar{k}_2)^{\frac{1-q}{q-p}}}{\bar{k}_2(q-1)}=T_{\max},\, \forall x_0\in \mathbb{R}^n\backslash\{0\},\]
  where $\bar{k}_1=k_1-\frac{\triangle}{\delta^p}>0$, $\bar{k}_2=k_2-\frac{\triangle}{\delta^q}>0$ and $\delta=\min\left\{\left(\frac{\triangle}{(1-\kappa)k_1}\right)^{\frac{1}{p}},\left(\frac{\triangle}{(1-\kappa)k_2}\right)^{\frac{1}{q}}\right\}$, and $0<\kappa<1$ is a design constant.
\end{lemma}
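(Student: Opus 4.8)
The plan is to reduce the statement to a scalar differential inequality for the expected Lyapunov value and then apply a sharp fixed-time comparison estimate.

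First, I would dispatch item~1 of Definition~\ref{def-Sfx}: the monotone condition together with local Lipschitz continuity of $b$ and $\sigma$ is precisely the classical hypothesis under which \eqref{SDE} admits a unique global strong solution $x(t;x_0)$ for each $x_0$, so no extra work is required there. Next, set $m(t):=\mathbb{E}[V(x(t;x_0))]$. Applying It\^o's formula to $V(x_t)$ along \eqref{SDE}, taking expectations, and justifying the exchange by the usual localization with exit times of balls (using radial unboundedness of $V$ to kill the local-martingale part in the limit) gives $\dot m(t)=\mathbb{E}[\mathcal{L}V(x_t)]\le -k_1\mathbb{E}[V(x_t)^{p}]-k_2\mathbb{E}[V(x_t)^{q}]+\triangle$. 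Since $s\mapsto s^{q}$ is convex on $\mathbb{R}_+$, Jensen gives $\mathbb{E}[V(x_t)^{q}]\ge m(t)^{q}$; granting also a lower bound $\mathbb{E}[V(x_t)^{p}]\ge m(t)^{p}$ (the delicate point, see below), one obtains the scalar inequality $\dot m(t)\le -k_1 m(t)^{p}-k_2 m(t)^{q}+\triangle$ valid for all $t\ge 0$.

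Second, I would carry out the comparison argument on $[0,\tau)$, where $m(t)>\delta$. There $m(t)\ge\delta$, and since $\delta$ is chosen so that $\bar k_1=k_1-\triangle/\delta^{p}>0$ and $\bar k_2=k_2-\triangle/\delta^{q}>0$, the disturbance is absorbed: $\triangle=(\triangle/\delta^{p})\delta^{p}\le(\triangle/\delta^{p})m(t)^{p}\le(\triangle/\delta^{p})m(t)^{p}+(\triangle/\delta^{q})m(t)^{q}$, so $\dot m(t)\le-\bar k_1 m(t)^{p}-\bar k_2 m(t)^{q}$ on $[0,\tau)$. Splitting the trajectory at the crossover value $r:=(\bar k_1/\bar k_2)^{1/(q-p)}$ (where $\bar k_1 r^{p}=\bar k_2 r^{q}$) and using $\dot m\le-\bar k_2 m^{q}$ while $m\ge r$ and $\dot m\le-\bar k_1 m^{p}$ while $\delta\le m\le r$, one integrates each piece to bound the time in $\{m\ge r\}$ by $(\bar k_1/\bar k_2)^{(1-q)/(q-p)}/(\bar k_2(q-1))$ and the time in $\{\delta\le m\le r\}$ by $(\bar k_1/\bar k_2)^{(1-p)/(q-p)}/(\bar k_1(1-p))$; since $r$ is exactly the minimizer of the sum of these two bounds, adding them gives $\tau\le T_{\max}$, hence $\mathbb{E}(\tau)\le T_{\max}$. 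Item~3 of Definition~\ref{def-Sfx} follows from an ultimate-boundedness argument: for $t\ge\tau$ the same differential inequality keeps $m(t)$ inside a ball of radius comparable to $\delta$, and composing with the class-$\mathcal{K}_\infty$ lower bound $\bar\alpha_1$ for $V$ turns this into $\mathbb{E}(\|x(t;x_0)\|)\le\epsilon_0$ for a suitable $\epsilon_0>0$.

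I expect the main obstacle to be exactly the $p$-term. Because $s\mapsto s^{p}$ with $p\in(0,1)$ is concave, Jensen only yields $\mathbb{E}[V(x_t)^{p}]\le m(t)^{p}$, which is the wrong direction for an upper bound on $\dot m$; indeed no estimate of the form $\mathbb{E}[V^{p}]\ge f(m)$ with $f>0$ can hold for arbitrary laws of $V$. To circumvent this I would first extract the coarser dissipation $\mathcal{L}V(x)\le -c\,V(x)+\triangle$, where $c=(k_1/\lambda)^{\lambda}(k_2/(1-\lambda))^{1-\lambda}$ with $\lambda=(q-1)/(q-p)$, via the weighted AM--GM inequality $k_1 s^{p}+k_2 s^{q}\ge c\,s$, and then invoke Lemma~\ref{lemma-bound} to obtain an a priori bound on $V(x_t)$ (in mean, and pathwise via a supermartingale argument applied to $e^{ct}V(x_t)-\tfrac{\triangle}{c}e^{ct}$); on the resulting sublevel set $s\mapsto s^{p}$ is Lipschitz and the required lower bound can be recovered. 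This is the only place where genuinely stochastic reasoning enters beyond the deterministic fixed-time template of Lemma~\ref{lemma-FXC}; the rest is the comparison bookkeeping described above, and a complete argument is given in \cite{2023-min}.
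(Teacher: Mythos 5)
First, a point of comparison: the paper does not prove this statement at all --- Lemma~\ref{lemma-Sfxs} is imported verbatim from \cite{2023-min} and used as a black box, so there is no in-paper proof to measure your attempt against. Judged on its own terms, your second paragraph (absorbing $\triangle$ into $\bar k_1,\bar k_2$ on $\{m\ge\delta\}$, splitting at the crossover level $r=(\bar k_1/\bar k_2)^{1/(q-p)}$, and integrating the two pieces) is correct and reproduces $T_{\max}$ exactly. The problem is the step you yourself flag as delicate, and your proposed repair does not close it.

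Two things go wrong in the repair. First, the supermartingale property of $e^{ct}\bigl(V(x_t)-\triangle/c\bigr)$ controls only $\mathbb{E}[V(x_t)]$; it does not give a pathwise (almost sure) bound on $V(x_t)$, and a maximal inequality would give only a probabilistic bound on the running supremum, so the ``sublevel set on which $s\mapsto s^p$ is Lipschitz'' is not actually available. Second, even if you grant an almost sure bound $V(x_t)\le M$, the best lower bound it yields is $\mathbb{E}[V^p]\ge M^{p-1}\,\mathbb{E}[V]\ge(\delta/M)^{1-p}\,m^p$ on $\{m\ge\delta\}$: the coefficient of the $p$-term degrades by the factor $(\delta/M)^{1-p}$, and $M$ necessarily depends on $V(x_0)$. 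The resulting settling-time estimate is then no longer uniform in the initial condition, which destroys precisely the fixed-time character of the conclusion and does not recover the stated $T_{\max}$. (Your observation that no inequality $\mathbb{E}[V^p]\ge f(\mathbb{E}[V])$ with $f>0$ can hold for arbitrary laws is correct --- take $V=M$ with probability $m/M$ and $0$ otherwise and let $M\to\infty$ --- which is exactly why no amount of massaging the marginal law of $V(x_t)$ can work.) The standard way out in the stochastic finite/fixed-time literature on which \cite{2023-min} builds is to avoid passing to $\mathbb{E}[V]$ for the concave power altogether: apply It\^o/Dynkin to $V^{1-p}$ along the stopped process; since $s\mapsto s^{1-p}$ is concave, the second-order It\^o correction is nonpositive and $\mathcal{L}(V^{1-p})\le(1-p)V^{-p}\mathcal{L}V\le-(1-p)\bar k_1$ on the relevant region, which bounds the expected hitting time directly and never requires $\mathbb{E}[V^p]\ge(\mathbb{E}[V])^p$. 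As written, your argument does not establish the lemma.
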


\begin{definition}\cite{2020-he}\label{def-MS-GEUB}
  Let $\gamma,\,\lambda,\,\rho$ be given positive constants. Systems \eqref{SDE} is said to be MS-GEUB if for any initial value $x_0$, the solution $x(t;x_0)$ of \eqref{SDE} satisfies $\mathbb{E}(\|x(t;x_0)\|^2)\leq \gamma \exp\{-\lambda t\}\mathbb{E}(\|x_0\|^2)+\rho$ for all $t>0$.
\end{definition}

\section{Centralized optimization problem}
\noindent We first address the following optimization problem in a centralized framework.
\begin{problem}(Centralized TV-OPs)\label{P1}
\begin{equation}
\left\{
\begin{array}{l}
\min \mathbb{E}[F(t,x_t)]\\
 d x_t = u^c dt + \sigma(t,x_t)dB(t),\,
x_0 = s_0,
\end{array}
\right.
\label{cx}
\end{equation}
  where $F:\mathbb{R}_+ \times \mathbb{R}^n \rightarrow \mathbb{R}$ and the relevant assumptions for \eqref{cx} are the same as for \eqref{SDE}, $u^c$ is the protocol which will be given below.
\end{problem}
\begin{remark}
  In Problem \ref{P1}, we present a stochastic version of the centralized optimization problem. Non-stochastic version has been proposed and studied in \cite{2016-rahili,fix4}. In comparison, we model our problem using the SDE, which takes the stochastic factor into account and has a much wider application. Below we design a suitable protocol $u^c$, under which there exists a sufficiently large $T$ such that for any $t\geq T$, the agent states $x_t$ satisfies that $\mathbb{E}\|x_t-x^*_t\|^2$ is bounded, where $x^*_t$ is the optimal trajectory.
\end{remark}
We now introduce a few assumptions that need to be used in this section:
\begin{assumption}\label{C-A1}
 The diffusion term in \eqref{cx} satisfies $\|\sigma(t,x)\|_F\leq \bar{\sigma}$ for all $x\in\mathbb{R}^n$.
\end{assumption}

\begin{assumption}\label{C-A2}
The objective function $F(t,x)$ satisfies
\begin{enumerate}
\item{$l_1$-strongly convex on $x$;}
\item{$\left\|\nabla_{xt} F(t,x) - \nabla_{xt} F(t,y)\right\|\leq l_2\|x-y\|$;}
\item{$H_F(t,x)$ is a positive definite invertible matrix and $\lambda_{\min}(H_F(t,x))\geq h>0$. In addition, $
H_F(t,x) = H_F(t,y)$ for all $x,y\in \mathbb{R}^n$.}
\end{enumerate}
\end{assumption}
\begin{remark}
  Assumption \ref{C-A1} presents the boundedness for the diffusion term and has been used in the \cite{2019-xu}. A bounded diffusion term ensures that the energy of the system or the deviation of the solution is within a certain range, thus helping to analyze the long-term behavior and stability of the system. Assumption \ref{C-A2} (i) of strong convexity ensures the existence of the optimal trajectory of Problem \ref{P1}, (ii) and (iii) are common assumptions in TV-OPs \cite{2016-rahili,2016-simon,2020-sun,2017-ning} where the assumption on $ \nabla_{xt} F(t,x)$ is bounded, here we only need lipschitz continuity.
\end{remark}
From the Lemmas \ref{lemma-min} and the Assumption \ref{C-A2}, clearly, there exists a unique optimal trajectory of Problem \ref{P1}. Then we construct the following centralized protocol:
\begin{equation}\label{u_c}
u^c = -\gamma_1 \nabla F(t,x_t)- H_F(t,x_t)^{-1}\nabla_{xt} F(t,x_t),
\end{equation}
where $\gamma_1>0.$ Using the above protocol, we can obtain the following theorem.
\begin{theorem}\label{the-cen}
With the Assumptions \ref{C-A1}-\ref{C-A2} and $\gamma_1>\frac{l_2}{hl_1}$, under protocol \eqref{u_c}, the tracking error of the agent with respect to the optimal trajectory $x^*_t$ is MS-GEUB, i.e.
\begin{align*}
\mathbb{E}\|x_t-x^*_t\|^2\leq &\mathbb{E}\|x_0-x^*_0\|^2\exp\{-(2\gamma_1 l_1-\frac{2l_2}{h})t\}\\
&+\frac{\bar{\sigma}^2h}{2\gamma_1 l_1h-2l_2}
\end{align*}
for all $t>0$.
\end{theorem}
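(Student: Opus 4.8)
The plan is to introduce the tracking error process $e_t:=x_t-x^*_t$, use the quadratic Lyapunov function $V(e)=\|e\|^2$, apply the stochastic differential operator $\mathcal{L}$ of Definition~\ref{def-LV}, and then conclude with Lemma~\ref{lemma-bound}.

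First I would characterize the optimal trajectory. By Assumption~\ref{C-A2}(i),(iii) together with Lemma~\ref{lemma-min}, for each $t$ there is a unique minimizer $x^*_t$ satisfying $\nabla F(t,x^*_t)=0$, and the map $t\mapsto x^*_t$ is $C^1$. Differentiating the identity $\nabla F(t,x^*_t)=0$ in $t$ and using that $H_F$ is invertible and, by Assumption~\ref{C-A2}(iii), independent of its spatial argument (write $H_F(t):=H_F(t,\cdot)$), one obtains $\dot x^*_t=-H_F(t)^{-1}\nabla_{xt}F(t,x^*_t)$. Subtracting this from \eqref{cx} with the protocol \eqref{u_c} and using $\nabla F(t,x^*_t)=0$ yields the error SDE
\begin{align*}
de_t={}& \Big[-\gamma_1\big(\nabla F(t,x_t)-\nabla F(t,x^*_t)\big)\\
&\ -H_F(t)^{-1}\big(\nabla_{xt}F(t,x_t)-\nabla_{xt}F(t,x^*_t)\big)\Big]dt\\
&\ +\sigma(t,x_t)\,dB(t).
\end{align*}

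Next I would compute $\mathcal{L}V$ for $V(e)=\|e\|^2$. Since $\partial V/\partial e=2e^\top$ and $\partial^2 V/\partial e^2=2I$, Definition~\ref{def-LV} gives $\mathcal{L}V=2e_t^\top(\text{drift})+\mathrm{trace}[\sigma^\top\sigma]$, and the diffusion term is bounded by $\|\sigma(t,x_t)\|_F^2\le\bar\sigma^2$ via Assumption~\ref{C-A1}. For the drift, the strong-convexity monotonicity inequality (obtained by summing Definition~\ref{def-Sconvex} with $x,y$ interchanged) gives $\langle\nabla F(t,x_t)-\nabla F(t,x^*_t),\,e_t\rangle\ge l_1\|e_t\|^2$, so the first term contributes $-2\gamma_1 l_1\|e_t\|^2$; the $\nabla_{xt}$-term is controlled by Cauchy--Schwarz, the bound $\|H_F(t)^{-1}\|\le 1/h$ from Assumption~\ref{C-A2}(iii), and the Lipschitz bound of Assumption~\ref{C-A2}(ii), giving at most $\tfrac{2l_2}{h}\|e_t\|^2$. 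Hence $\mathcal{L}V\le-\big(2\gamma_1 l_1-\tfrac{2l_2}{h}\big)V+\bar\sigma^2$, and the hypothesis $\gamma_1>\tfrac{l_2}{hl_1}$ makes $k_1:=2\gamma_1 l_1-\tfrac{2l_2}{h}$ strictly positive.

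Finally I would apply Lemma~\ref{lemma-bound} with $k_2:=\bar\sigma^2$ and the trivial class-$\mathcal{K}_\infty$ bounds $\bar\alpha_1(r)=\bar\alpha_2(r)=r^2$ (taking expectation over the random initial state $x_0=s_0$ as well), which yields $\mathbb{E}\|x_t-x^*_t\|^2\le \mathbb{E}\|x_0-x^*_0\|^2 e^{-k_1 t}+\bar\sigma^2/k_1$; rewriting $\bar\sigma^2/k_1=\bar\sigma^2 h/(2\gamma_1 l_1 h-2l_2)$ gives the stated bound, so the error is MS-GEUB in the sense of Definition~\ref{def-MS-GEUB}. I expect the main obstacle to be the bookkeeping around $x^*_t$: establishing that it is globally well defined and $C^1$, justifying termwise differentiation of $\nabla F(t,x^*_t)=0$, and verifying that the error SDE meets the regularity required to invoke Lemma~\ref{lemma-bound} (existence of a unique strong solution); once that is settled, the Lyapunov estimate is a short computation whose only quantitative input is the dominance condition $2\gamma_1 l_1>2l_2/h$.
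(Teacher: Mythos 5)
Your proposal is correct and follows essentially the same route as the paper's own proof: derive the error SDE from $\nabla F(t,x^*_t)=0$, apply the generator to a quadratic Lyapunov function (the paper uses $\tfrac12\|\hat x_t\|^2$ rather than $\|e_t\|^2$, an immaterial scaling), bound the drift via strong-convexity monotonicity and the Lipschitz/Hessian estimates, and conclude with Lemma~\ref{lemma-bound}. The resulting constants match the theorem statement exactly.
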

\begin{proof}
From Lemma \ref{lemma-min} and Assumption \ref{C-A2}, there exists
\[x^*_t=\arg\min F(t,x)
   \Leftrightarrow\nabla F(t,x^*_t)=0.\]
   Differentiating both sides of the equation on $t$ yields
  \begin{equation}\label{y*}
    dx^*_t =  -H_F(t,x^*_t)^{-1}\nabla_{xt}F(t,x^*_t)dt.
  \end{equation}
Based on the system \eqref{y*}, we establish the following error system where $\hat{x}_t = x_t-x^*_t$,
\[
d\hat{x}_t
= \left(u^c + H_F(t,x^*_t)^{-1}\nabla_{xt} F(t,x^*_t)\right)dt+\sigma(t,x_t)dB(t).
\]
Consider a positive definite function $V_1 = \frac{1}{2}\hat{x}_t^T\hat{x}_t$, based on Definition \ref{def-LV} we have
\begin{align*}\label{LV1}\small
&\mathcal{L}V_1\\
=& \hat{x}_t^T\left(-\gamma_1 \nabla F(t,x_t)-H_F(t,x_t)^{-1}\nabla_{xt} F(t,x_t)\right.\\
&\left.+ H_F(t,x_t^*)^{-1}\nabla_{xt}F(t,x_t^*)\right)
+ \frac{1}{2}trace[\sigma(t,x_t)^T\sigma(t,x_t)]\\
\leq& \frac{\bar{\sigma}^2}{2} \underbrace{-\gamma_1 \hat{x}_t^T \nabla F(t,x_t)}_{W_1}\\
&\underbrace{-\hat{x}_t^T\left(H_F(t,x_t)^{-1}\nabla_{xt} F(t,x_t) - H_F(t,x_t^*)^{-1}\nabla_{xt} F(t,x_t^*)\right)}_{W_2}.
\end{align*}
By the properties of strongly convex functions in Definition \ref{def-Sconvex}, we have
\begin{align*}
W_1=&-\gamma_1 \hat{x}_t^T \nabla F(t,x_t)
=-\gamma_1 \hat{x}_t^T (\nabla F(t,x_t)-\nabla F(t,x_t^*))\\
\leq& -\gamma_1 l_1 \|\hat{x}_t\|^2 = -2\gamma_1 l_1 V_1.
\end{align*}
Applying Lemma \ref{lemma-inequality} and Assumption \ref{C-A2}, one obtains
\begin{align*}
W_2
=&-\hat{x}_t^T H_F(t,x_t)^{-1}\left(\nabla_{xt}F(t,x_t) -\nabla_{xt}F(t,x_t^*)\right)\\
\leq& \|\hat{x}_t\|\cdot\left\|H_F(t,x_t)^{-1}\right\|\cdot\left\|\nabla_{xt}F(t,x_t) -\nabla_{xt}F(t,x_t^*)\right\|\\
\leq&\frac{l_2}{h}\|\hat{x}_t\|^2=\frac{2l_2}{h}V_1.
\end{align*}
With the above inequalities, we have $\mathcal{L}V_1\leq -(2\gamma_1 l_1-\frac{2l_2}{h})V_1+ \frac{1}{2}\bar{\sigma}^2$.   Then from Lemma \ref{lemma-bound} we can conclude that the agent states will converge to the nearby of the optimal trajectory and for all $t>0$
  \[0\leq \mathbb{E}[V_1]\leq \mathbb{E}[V_1(x_0)]\exp\{-(2\gamma_1 l_1-\frac{2l_2}{h})t\}+\frac{\bar{\sigma}^2h}{4\gamma_1 l_1h-4l_2}.\]
  Then from the above inequality we get that $E[V_1]$ is MS-GEUB, i.e.
 \begin{align*}
\mathbb{E}\|x_t-x^*_t\|^2\leq &\mathbb{E}\|x_0-x^*_0\|^2\exp\{-(2\gamma_1 l_1-\frac{2l_2}{h})t\}\\
&+\frac{\bar{\sigma}^2h}{2\gamma_1 l_1h-2l_2}
\end{align*}
  for all $t>0$.
 \end{proof}
 \begin{remark}
  Theorem \ref{the-cen} is a stochastic version of Theorem 3.4 in \cite{2016-rahili}, in which the centralized TV-OPs on MASs is extended to the one on SMASs.
 \end{remark}

\section{Distributed optimization problem}
\noindent As a matter of fact, the centralized protocols have disadvantages such as high communication costs and poor robustness, however in practical cases, we would prefer to use less expenses for achieving the desired goals with high robustness. We therefore propose the following problem:
\begin{problem}(Distributed TV-OPs with Consensus Constraint)\label{P2}
\begin{equation}
\left\{
\begin{array}{l}
\min \mathbb{E}\left[\sum_{i\in \mathfrak{N}}f_i(t,x_{it})\right],\\
d x_{it} = u_i^d dt + \sigma_i(t,x_{it})dB(t), \, x_{i0} = s_{i0},\\
\mbox{subject to} \, \mathbb{E}\left\|x_{it}-\frac{1}{N}\sum_{j\in \mathfrak{N}}x_{jt}\right\|^2\leq\delta,\,\forall i\in \mathfrak{N},
\end{array}
\right.
\label{x_i}
\end{equation}
  where the assumptions for $x_{it}$, $\sigma_i$ and $s_{i0}$ are the same as \eqref{SDE}, each $f_i:\mathbb{R}_+ \times \mathbb{R}^n \rightarrow \mathbb{R}$, $u_i^d$ is the protocol for each agent i which will be given below, $\delta >0$ is a constant which can be calculated in the following discussion. $\mathbb{E}\|x_{it}-\frac{1}{N}\sum_{j\in \mathfrak{N}}x_{jt}\|^2\leq\delta$ is a weaker version of the consensus constraint, which implies that each agent is not very far from the others. We abbreviate $\sigma_i(t,x_{it})$ as $\sigma_i$ below satisfying Assumption \ref{C-A1}.
\end{problem}
\begin{remark}
The non-stochastic version of the distributed TV-OPs has been mentioned in the literature referred to in Section \Romannum{1}. The problem we proposed is more extensive than that due to the fact that we consider stochastic factors.
Furthermore, the consensus constraint, often written as $x_{it}=x_{jt}, \, \forall i,j\in \mathfrak{N}$ in the non-stochastic version, is hard to reach in the stochastic case. So we approximate the consensus constraint by requiring the mean square deviation of all agent states to reach some small area, which can also be written as $\mathbb{E}\|x_{it}-x_{jt}\|^2\leq\delta$.
In the following we design a suitable protocol $u_i^d$ for each agent i, under which there exists a sufficiently large $T$ such that for any $t\geq T$, the agents' states $x_{it}$ satisfy that $\mathbb{E}\left\|x_{it}-\frac{1}{N}\sum_{j\in \mathfrak{N}}x_{jt}\right\|^2$ and $\mathbb{E}\|x_{it}-x_t^*\|^2$ are all bounded, where $x^*_t$ is the optimal trajectory.
\end{remark}
In the distributed case, each agent only knows the information about itself and its neighbors, but the global information is absent, which leads to difficulties in control. With this in mind, we first propose an estimator for estimating the global information based on the following assumptions.
\begin{assumption}\label{A1}
The graph $\mathcal{G}$ of the SMASs \eqref{x_i} is directed, detail-balanced in weights and strongly connected. The adjacency matrix of $\mathcal{G}$ is denoted as $\mathcal{A} = [a_{ij}]_{n\times n}$ and satisfies Definition \ref{def-graph} with $\tilde{a}_{ij}=\xi_i a_{ij},\, \xi_i\in\mathbb{R}_{++}$.
\end{assumption}
\begin{assumption}\label{A2}
For the Hessian matrix of $f_i(t,x)$ denoted as $H_i(t,x)$, we suppose that the derivative of $H_i(t,x)$ with respect to $t$ satisfies $\sup_{t\in\mathbb{R}_+}\left\|\frac{d}{dt}H_{i}(t,x)-\frac{d}{d t} H_{j}(t,y)\right\|\leq L_H$ for all $i,j\in \mathfrak{N}$, $x,y\in\mathbb{R}^n$,
\end{assumption}
\begin{remark}
Assumption \ref{A1} is a common assumption for a class of directed graphs \cite{2024-li,2023-yu}, of which undirected graphs are a special case. Assumption \ref{A2}  is used to ensure that the estimator estimates accurately in bounded time, which has been used in works such as \cite{2016-rahili,2020-li,fix4,2017-ning}. Whereas they all assume boundedness about the first order partial derivative of  $\nabla f_{i}(t,x)$, $H_{i}(t,x)$ at $t$ and the second order partial derivative of $\nabla f_{i}(t,x)$ at t, we only take assumption on $H_{i}(t,x)$ to be sufficient.
\end{remark}
\begin{lemma}\label{lemma-0}
 Suppose that $g(x)$ is an odd mapping and $a_{ij}=a_{ji}$, we can get
 $\sum_{i,j\in \mathfrak{N}}a_{ij}g(x_i-x_j) = 0.$
\end{lemma}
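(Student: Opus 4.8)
The statement is that $\sum_{i,j\in\mathfrak{N}}a_{ij}\,g(x_i-x_j)=0$ whenever $g$ is odd and the weights are symmetric ($a_{ij}=a_{ji}$). The plan is to exploit the symmetry of the double sum by swapping the roles of the indices $i$ and $j$. Concretely, I would denote $S:=\sum_{i,j\in\mathfrak{N}}a_{ij}\,g(x_i-x_j)$ and, since $i$ and $j$ are dummy variables ranging over the same finite index set $\mathfrak{N}$, relabel to obtain $S=\sum_{i,j\in\mathfrak{N}}a_{ji}\,g(x_j-x_i)$. Now I would apply the two hypotheses: $a_{ji}=a_{ij}$ by symmetry of the adjacency matrix, and $g(x_j-x_i)=-g(x_i-x_j)$ because $g$ is an odd mapping (so $g(-y)=-g(y)$). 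Substituting these into the relabeled expression gives $S=\sum_{i,j\in\mathfrak{N}}a_{ij}\,\bigl(-g(x_i-x_j)\bigr)=-S$.

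From $S=-S$ I immediately conclude $2S=0$, hence $S=0$, which is exactly the claim. I would also remark that the diagonal terms $i=j$ contribute nothing regardless — $g(x_i-x_i)=g(0)=0$ for an odd map — so the argument is insensitive to the convention $a_{ii}=0$; this is worth a one-line aside but is not essential to the proof.

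I do not anticipate any genuine obstacle here: the result is a short symmetrization identity and the only things to be careful about are (a) making the relabeling of summation indices explicit rather than hand-waving it, and (b) noting that $g$ being odd in particular forces $g(0)=0$, though this is not even needed for the pairing argument. One could alternatively present the same proof by pairing each ordered term $(i,j)$ with $(j,i)$: the sum of the two contributions is $a_{ij}g(x_i-x_j)+a_{ji}g(x_j-x_i)=a_{ij}\bigl(g(x_i-x_j)+g(x_j-x_i)\bigr)=0$, and summing over all unordered pairs (plus the vanishing diagonal) yields $0$. I would likely go with the first ($S=-S$) formulation as it is the crispest.
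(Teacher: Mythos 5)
Your proof is correct and uses essentially the same symmetrization argument as the paper: the paper splits $S=\tfrac12 S+\tfrac12 S$, relabels $i\leftrightarrow j$ in one copy, and applies $a_{ij}=a_{ji}$ together with oddness of $g$ to cancel the two halves, which is just your $S=-S$ identity written out. No gap.
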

\begin{proof}
According to the above conditions, we can get the following transformations:
\begin{align*}
  &\sum_{i,j\in \mathfrak{N}}a_{ij}g(x_i-x_j)\\
  =& \frac{1}{2}\sum_{i,j\in \mathfrak{N}}a_{ij}g(x_i-x_j)+\frac{1}{2}\sum_{i,j\in \mathfrak{N}}a_{ij}g(x_i-x_j)\\
  =& \frac{1}{2}\sum_{i,j\in \mathfrak{N}}a_{ij}g(x_i-x_j)+\frac{1}{2}\sum_{i,j\in \mathfrak{N}}a_{ji}g(x_j-x_i)\\
  =& \frac{1}{2}\sum_{i,j\in \mathfrak{N}}a_{ij}g(x_i-x_j)-\frac{1}{2}\sum_{i,j\in \mathfrak{N}}a_{ij}g(x_i-x_j)
  =0
\end{align*}
\end{proof}
Then we propose the following estimator:
\begin{align}
z_{it}=&\mbox{ }\zeta_{it}+H_i(t,x_{it})\label{z_i},\\
\dot{\zeta}_{it}=&\sum_{j\in \mathfrak{N}}\tilde{a}_{ij}(-\alpha_1 sig^{p}(\hat{z}_{ij}) -\beta_1sig^{q}(\hat{z}_{ij})-\gamma_2sign(\hat{z}_{ij})),\label{zeta}
\end{align}
where $\alpha_1,\beta_1,\gamma_2\in\mathbb{R}_
  {++}$, $\hat{z}_{ij} =z_{it}-z_{it}$. In turn, we can obtain the following theorem.
\begin{notation}\label{note}
 We denote that $\mathcal{L}_p$, $\mathcal{L}_q$, $\mathcal{L}_2$, $\mathcal{L}_1$ represent respectively the Laplacian matrix of $[\tilde{a}_{ij}^{\frac{2}{p+1}}]_{n\times n}$, $[\tilde{a}_{ij}^{\frac{2}{q+1}}]_{n\times n}$, $ [\tilde{a}_{ij}^2]_{n\times n}$, $[\tilde{a}_{ij}]_{n\times n}$. Moreover by Lemma \ref{lemma-eig}, we can obtain the following inequalities where $\mathbf{e}=[e_1^T,e_2^T,\cdots,e_n^T]^T$ ($e_i\in\mathbb{R}^n,\,\forall i\in \mathfrak{N}$):
\begin{align*}
&\sum_{i,j\in \mathfrak{N}}\tilde{a}_{ij}^{\frac{2}{p+1}}\|e_i-e_j\|^2 =2\mathbf{e}^T\left(\mathcal{L}_{p}\bigotimes I_m\right)\mathbf{e}\geq 2\lambda_{2}[\mathcal{L}_{p}]\mathbf{e}^T\mathbf{e} 
\\
&\sum_{i,j\in \mathfrak{N}}\tilde{a}_{ij}^{\frac{2}{q+1}}\|e_i-e_j\|^2=2\mathbf{e}^T\left(\mathcal{L}_{q}\bigotimes I_m\right)\mathbf{e}\geq 2\lambda_{2}[\mathcal{L}_{q}]\mathbf{e}^T\mathbf{e}
\\
&\sum_{i,j\in \mathfrak{N}}\tilde{a}_{ij}^{2}\|e_i-e_j\|^2=2\mathbf{e}^T\left(\mathcal{L}_{2}\bigotimes I_m\right)\mathbf{e}\geq 2\lambda_{2}[\mathcal{L}_{2}]\mathbf{e}^T\mathbf{e},\\
&\sum_{i,j\in \mathfrak{N}}\tilde{a}_{ij}\|e_i-e_j\|^2=2\mathbf{e}^T\left(\mathcal{L}_{1}\bigotimes I_m\right)\mathbf{e}\geq 2\lambda_{2}[\mathcal{L}_{1}]\mathbf{e}^T\mathbf{e},
\end{align*}
\end{notation}
\begin{theorem}\label{the-estimate}
Let the Assumptions \ref{A1}-\ref{A2} hold. Suppose that $\gamma_2\geq \frac{L_H\sqrt{2N}}{\sqrt{\lambda_2[\mathcal{L}_2]}}$ and the initial value of $\zeta_{it}$ be 0, i.e. $\sum_{i\in \mathfrak{N}}\zeta_{i0}=0$, \eqref{z_i}-\eqref{zeta} can enable $z_{it}$ converge to $\frac{1}{N}\sum_{j\in \mathfrak{N}}H_j(t,x_{jt}),\, \forall i\in \mathfrak{N}$ within a fixed-time $t\leq T_1$, where \[T_1=\frac{1}{2^p\alpha_1\lambda_2^{\frac{p+1}{2}}[\mathcal{L}_p](1-p)}+
\frac{ n^{\frac{q-1}{2}}N^{q-1}}{2^q\beta_1\lambda_2^{\frac{q+1}{2}}[\mathcal{L}_q](q-1)}.\]
\end{theorem}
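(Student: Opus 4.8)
The plan is to turn the claim into a robust fixed-time consensus statement for the estimation error and then invoke Lemma \ref{lemma-FXC}. First I would check that the unweighted average $\bar z_t:=\tfrac1N\sum_{i\in\mathfrak N}z_{it}$ is identically equal to the target. Since $z_{it}=\zeta_{it}+H_i(t,x_{it})$, we have $\bar z_t=\tfrac1N\sum_{i\in\mathfrak N}\zeta_{it}+\tfrac1N\sum_{j\in\mathfrak N}H_j(t,x_{jt})$, so it is enough to show $\sum_{i\in\mathfrak N}\zeta_{it}\equiv0$. By Assumption \ref{A1} the weights are symmetric, $\tilde a_{ij}=\tilde a_{ji}$, and $sig^{p},sig^{q},sign$ are odd, so Lemma \ref{lemma-0} gives $\tfrac{d}{dt}\sum_{i\in\mathfrak N}\zeta_{it}=\sum_{i,j\in\mathfrak N}\tilde a_{ij}\big(-\alpha_1 sig^{p}(\hat z_{ij})-\beta_1 sig^{q}(\hat z_{ij})-\gamma_2 sign(\hat z_{ij})\big)=0$; combined with $\sum_{i\in\mathfrak N}\zeta_{i0}=0$ this yields $\sum_{i\in\mathfrak N}\zeta_{it}\equiv0$, i.e.\ $\bar z_t=\tfrac1N\sum_{j\in\mathfrak N}H_j(t,x_{jt})$ for all $t\ge0$. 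It then remains to prove that the disagreement $\mathbf e=[e_1^T,\dots,e_N^T]^T$, with $e_{it}:=z_{it}-\bar z_t$ (so that $\mathbf 1^T\mathbf e=0$ and $\hat z_{ij}=e_{it}-e_{jt}$), reaches $\mathbf 0$ within time $T_1$ and remains there.

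Next I would compute the error dynamics: differentiating and using $\dot{\bar z}_t=\tfrac1N\sum_{j\in\mathfrak N}\tfrac{d}{dt}H_j(t,x_{jt})$ gives $\dot e_{it}=\sum_{j\in\mathfrak N}\tilde a_{ij}\big(-\alpha_1 sig^{p}(e_{it}-e_{jt})-\beta_1 sig^{q}(e_{it}-e_{jt})-\gamma_2 sign(e_{it}-e_{jt})\big)+d_{it}$, where $d_{it}=\tfrac1N\sum_{j\in\mathfrak N}\big(\tfrac{d}{dt}H_i(t,x_{it})-\tfrac{d}{dt}H_j(t,x_{jt})\big)$, so that by Assumption \ref{A2} $\|d_{it}\|\le L_H$ and $\|\mathbf d\|\le\sqrt N\,L_H$. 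Taking $V=\tfrac12\mathbf e^T\mathbf e$ and using $\tilde a_{ij}=\tilde a_{ji}$ to symmetrize each sum, $\dot V$ splits into three parts. The $sig^{p}$ part equals $-\tfrac{\alpha_1}{2}\sum_{i,j\in\mathfrak N}\tilde a_{ij}\|e_{it}-e_{jt}\|_{p+1}^{p+1}$ (using $x^{T}sig^{p}(x)=\|x\|_{p+1}^{p+1}$); the power-mean inequality of Lemma \ref{lemma-inequality}(3) with exponent $\tfrac{p+1}{2}\in(0,1)$ and the $\mathcal L_p$-inequality of Notation \ref{note} bound it by $-k_1V^{(p+1)/2}$, with $k_1$ a positive multiple of $\alpha_1\lambda_2^{(p+1)/2}[\mathcal L_p]$. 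For the $sig^{q}$ part $-\tfrac{\beta_1}{2}\sum_{i,j\in\mathfrak N}\tilde a_{ij}\|e_{it}-e_{jt}\|_{q+1}^{q+1}$, since $q+1>2$ one first uses Lemma \ref{lemma-inequality}(1) (producing $n^{-(q-1)/2}$), then the power-mean inequality of Lemma \ref{lemma-inequality}(3) over the $\le N^2$ summands with exponent $\tfrac{q+1}{2}>1$ (producing $N^{-(q-1)}$) and the $\mathcal L_q$-inequality of Notation \ref{note}, obtaining $-k_2V^{(q+1)/2}$ with $k_2$ a positive multiple of $\beta_1\lambda_2^{(q+1)/2}[\mathcal L_q]n^{-(q-1)/2}N^{-(q-1)}$. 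The $\gamma_2$ part plus the disturbance is $-\tfrac{\gamma_2}{2}\sum_{i,j\in\mathfrak N}\tilde a_{ij}\|e_{it}-e_{jt}\|_1+\mathbf e^T\mathbf d$; using $\|\cdot\|_1\ge\|\cdot\|_2$, the power-mean inequality with exponent $\tfrac12$ and the $\mathcal L_2$-inequality of Notation \ref{note} give $\sum_{i,j\in\mathfrak N}\tilde a_{ij}\|e_{it}-e_{jt}\|_1\ge\sqrt{2\lambda_2[\mathcal L_2]}\,\|\mathbf e\|$, while $\mathbf e^T\mathbf d\le\sqrt N\,L_H\,\|\mathbf e\|$, so this part is $\le\big(\sqrt N\,L_H-\tfrac{\gamma_2}{2}\sqrt{2\lambda_2[\mathcal L_2]}\big)\|\mathbf e\|\le0$ precisely because $\gamma_2\ge\tfrac{\sqrt{2N}\,L_H}{\sqrt{\lambda_2[\mathcal L_2]}}$.

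Discarding the nonpositive $\gamma_2$-part leaves $\dot V\le -k_1V^{(p+1)/2}-k_2V^{(q+1)/2}$ with $\tfrac{p+1}{2}\in(0,1)$ and $\tfrac{q+1}{2}>1$, and $V$ is radially unbounded, so Lemma \ref{lemma-FXC} gives fixed-time convergence $\mathbf e\to\mathbf 0$ with settling time $\tfrac{1}{k_1(1-\frac{p+1}{2})}+\tfrac{1}{k_2(\frac{q+1}{2}-1)}$; inserting the constants above, this equals $T_1$, and for $t\ge T_1$ we get $z_{it}=\bar z_t=\tfrac1N\sum_{j\in\mathfrak N}H_j(t,x_{jt})$ for every $i\in\mathfrak N$. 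The main obstacle is the $\gamma_2$-part: because $\mathbf d$ need not vanish, $\mathbf e=\mathbf 0$ is not a classical equilibrium, so this step is really a sliding-mode argument and the analysis of $\dot V$ at points where some $e_{it}-e_{jt}=0$ has to be carried out in the Filippov/differential-inclusion sense — the claimed exact reaching of the target (rather than convergence to a neighborhood) hinges on this, and one must keep the constants sharp so that the $\mathcal L_2$-gain meets the stated threshold on $\gamma_2$. A secondary point is ensuring $\tfrac{d}{dt}H_i(t,x_{it})$ is well defined and bounded along the stochastic trajectories, which is where Assumption \ref{A2}, together with the Hessians being state-independent as in Assumption \ref{C-A2}(iii), is used.
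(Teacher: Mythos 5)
Your proposal is correct and follows essentially the same route as the paper's proof: the zero-sum invariant for $\zeta_{it}$ via Lemma \ref{lemma-0}, the quadratic Lyapunov function of the disagreement $z_{it}-\tfrac1N\sum_j z_{jt}$, the three bounds from Notation \ref{note} and Lemma \ref{lemma-inequality} giving $-k_1V^{(p+1)/2}-k_2V^{(q+1)/2}$ plus a sign-term that dominates the $L_H$-disturbance under the stated threshold on $\gamma_2$, and Lemma \ref{lemma-FXC} for the settling time $T_1$. The only (immaterial) difference is that you compare the sign-term and the disturbance through $\|\mathbf e\|$ rather than $\sum_i\|\hat z_i\|$, which yields the same bound on $\gamma_2$; the technical caveats you flag (Filippov-sense analysis at the discontinuity and differentiability of $H_i(t,x_{it})$ along stochastic trajectories) are likewise left implicit in the paper.
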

\begin{proof}
 Using Lemma \ref{lemma-0} yields
 \begin{align*}
 &\sum_{i\in \mathfrak{N}}\dot{\zeta}_{it}(t)\\
  =&
 \sum_{i,j\in \mathfrak{N}}\tilde{a}_{ij}(-\alpha_1 sig^{p}(\hat{z}_{ij}) -\beta_1sig^{q}(\hat{z}_{ij})-\gamma_2sign(\hat{z}_{ij}))=0
 \end{align*}
 With $\sum_{i\in \mathfrak{N}}\zeta_{i0}=0$ and $\sum_{i\in \mathfrak{N}}\dot{\zeta}_{it} = 0$, we easily get  $\sum_{i\in \mathfrak{N}}\zeta_{it}\equiv0$ for all $t\geq 0$. Furthermore, we have $\sum_{i\in \mathfrak{N}}z_{it}=\sum_{i\in \mathfrak{N}}\zeta_{it}+\sum_{i\in \mathfrak{N}}H_i(t,x_{it})=\sum_{i\in \mathfrak{N}}H_i(t,x_{it})$.

Let $ \hat{z}_i = z_{it}-\frac{1}{N}\sum_{j\in \mathfrak{N}}z_{jt}$, so that
$\sum_{i\in \mathfrak{N}} \hat{z}_i = 0$ and
 $\hat{z}_i-\hat{z}_j=\hat{z}_{ij} =z_{it}-z_{it}$.
Now, consider a positive definite function $V_2=\frac{1}{2}\sum_{i\in \mathfrak{N}}\hat{z}_i^T\hat{z}_i$, of which the derivative of $t$ is given by
\begin{align*}
\dot{V}_2
=& \sum_{i\in \mathfrak{N}}\hat{z}_i^T\dot{\hat{z}}_i
= \sum_{i\in \mathfrak{N}}\hat{z}_i^T\dot{z}_{it}
-\left(\sum_{i\in \mathfrak{N}}\hat{z}_i^T\right)\left(\frac{1}{N}\sum_{j\in \mathfrak{N}}\dot{z}_{jt}\right) \\
=& \sum_{i\in \mathfrak{N}}\hat{z}_i^T\dot{z}_{it}-\left(\sum_{i\in \mathfrak{N}}\hat{z}_i^T\right)\left(\frac{1}{N}\sum_{j\in \mathfrak{N}}\frac{d}{d t}H_j(t,x_{jt})\right)\\
=& \underbrace{-\alpha_1\sum_{i,j\in \mathfrak{N}}\tilde{a}_{ij}\hat{z}_i^T sig^p(\hat{z}_{ij})}_{W_3}
\underbrace{-\beta_1\sum_{i,j\in \mathfrak{N}}\tilde{a}_{ij}\hat{z}_i^T sig^q(\hat{z}_{ij})}_{W_4}
\\
&\underbrace{- \gamma_2\sum_{i,j\in \mathfrak{N}}\tilde{a}_{ij}\hat{z}_i^T sign(\hat{z}_{ij})}_{W_5}\\
&\underbrace{-\frac{1}{N}\sum_{i,j\in \mathfrak{N}}\hat{z}_i^T \left(\frac{d}{d t}H_i(t,x_{it})-\frac{d}{d t}H_j(t,x_{jt})\right)}_{W_6}
\end{align*}
Applying the inequalities in Notation \ref{note}, we can perform the following operations:
\begin{align}
W_3=&-\alpha_1\sum_{i,j\in \mathfrak{N}}\tilde{a}_{ij}\hat{z}_i^T sig^p(\hat{z}_{ij})
=-\frac{\alpha_1}{2}\sum_{i,j\in \mathfrak{N}}\tilde{a}_{ij}\|\hat{z}_{ij}\|_{p+1}^{p+1}\nonumber\\
\leq&-\frac{\alpha_1}{2}\sum_{i,j\in \mathfrak{N}}\tilde{a}_{ij}\|\hat{z}_{ij}\|^{p+1}
=-\frac{\alpha_1}{2}\sum_{i,j\in \mathfrak{N}}\left(\tilde{a}_{ij}^{\frac{2}{p+1}}\|\hat{z}_{ij}\|^2\right)^{\frac{p+1}{2}}\nonumber\\
\leq&-\frac{\alpha_1}{2}\left(\sum_{i,j\in \mathfrak{N}}\tilde{a}_{ij}^{\frac{2}{p+1}}\|\hat{z}_{ij}\|^2\right)^{\frac{p+1}{2}}
\leq-\frac{\alpha_1}{2}(4\lambda_2[\mathcal{L}_p]V)^{\frac{p+1}{2}}\nonumber\\
\leq&-2^p\alpha_1\lambda_2^{\frac{p+1}{2}}[\mathcal{L}_p]V_2^{\frac{p+1}{2}};\label{W3}\\
W_4=&-\beta_1\sum_{i,j\in \mathfrak{N}}\tilde{a}_{ij}\hat{z}_i^T sig^q(\hat{z}_{ij})
=-\frac{\beta_1}{2}\sum_{i,j\in \mathfrak{N}}\tilde{a}_{ij}\|\hat{z}_{ij}\|_{q+1}^{q+1}\nonumber\\
\leq&-\frac{\beta_1}{2}n^{\frac{1-q}{2}}\sum_{i,j\in \mathfrak{N}}\tilde{a}_{ij}\|\hat{z}_{ij}\|^{q+1}\nonumber\\
=&-\frac{\beta_1}{2}n^{\frac{1-q}{2}}\sum_{i,j\in \mathfrak{N}}\left(\tilde{a}_{ij}^{\frac{2}{q+1}}\|\hat{z}_{ij}\|^2\right)^{\frac{q+1}{2}}\nonumber\\
\leq&-\frac{\beta_1}{2}n^{\frac{1-q}{2}}N^{1-q}\left(\sum_{i,j\in \mathfrak{N}}\tilde{a}_{ij}^{\frac{2}{q+1}}\|\hat{z}_{ij}\|^2\right)^{\frac{q+1}{2}}\nonumber\\
\leq&-\frac{\beta_1}{2}n^{\frac{1-q}{2}}N^{1-q}(4\lambda_2[\mathcal{L}_q]V_2)^{\frac{q+1}{2}}\nonumber\\
\leq&-2^q\beta_1n^{\frac{1-q}{2}}N^{1-q}\lambda_2^{\frac{q+1}{2}}[\mathcal{L}_q]V_2^{\frac{q+1}{2}};\label{W4}\\
W_5=&- \gamma_2\sum_{i,j\in \mathfrak{N}}\tilde{a}_{ij}\hat{z}_i^T sign(\hat{z}_{ij})
 =- \frac{\gamma_2}{2}\sum_{i,j\in \mathfrak{N}}\tilde{a}_{ij}\hat{z}_{ij}^T sign(\hat{z}_{ij})\nonumber\\
  =&- \frac{\gamma_2}{2}\sum_{i,j\in \mathfrak{N}}\tilde{a}_{ij}\|\hat{z}_{ij}\|_1
  \leq- \frac{\gamma_2}{2}\sum_{i,j\in \mathfrak{N}}\tilde{a}_{ij}\|\hat{z}_{ij}\|\nonumber\\
  =&- \frac{\gamma_2}{2}\sum_{i,j\in \mathfrak{N}}(\tilde{a}_{ij}^2\|\hat{z}_{ij}\|^2)^{\frac{1}{2}}
  \leq- \frac{\gamma_2}{2}\left(\sum_{i,j\in \mathfrak{N}}\tilde{a}_{ij}^2\|\hat{z}_{ij}\|^2\right)^{\frac{1}{2}}\nonumber\\
  \leq&- \frac{\gamma_2}{2}\left(\sum_{i,j\in \mathfrak{N}}\tilde{a}_{ij}^2\|\hat{z}_{ij}\|^2\right)^{\frac{1}{2}}
  \leq- \frac{\gamma_2}{2}(2\lambda_2[\mathcal{L}_2]\sum_{i\in \mathfrak{N}}\|\hat{z}_{i}\|^2)^{\frac{1}{2}}\nonumber\\
  \leq&- \frac{\gamma_2\sqrt{\lambda_2[\mathcal{L}_2]}}{\sqrt{2}}(\sum_{i\in \mathfrak{N}}\|\hat{z}_{i}\|^2)^{\frac{1}{2}}
\leq- \frac{\gamma_2\sqrt{\lambda_2[\mathcal{L}_2]}}{\sqrt{2N}}\sum_{i\in \mathfrak{N}}\|\hat{z}_{i}\|.\label{W5}
\end{align}
For $W_6$, according to Assumption \ref{A2}, we have
\begin{align}\label{W6}
W_6 =& -\frac{1}{N}\sum_{i,j\in \mathfrak{N}}\hat{z}_i^T \left(\frac{d}{d t}H_i(t,x_{it})-\frac{d}{d t}H_j(t,x_{jt})\right)\nonumber\\
=& -\frac{1}{2N}\sum_{i,j\in \mathfrak{N}}\hat{z}_{ij}^T \left(\frac{d}{d t}H_i(t,x_{it})-\frac{d}{d t}H_j(t,x_{jt})\right)\nonumber\\
\leq& \frac{1}{2N}\sum_{i,j\in \mathfrak{N}}\|\hat{z}_{ij}\| \cdot \left\|\frac{d}{d t}H_i(t,x_{it})-\frac{d}{d t}H_j(t,x_{jt})\right\|\nonumber\\
\leq& \frac{L_H}{2N}\sum_{i,j\in \mathfrak{N}}\|\hat{z}_{ij}\|
\leq \frac{L_H}{2N}\sum_{i,j\in \mathfrak{N}}\left(\|\hat{z}_{i}\|+ \|\hat{z}_{j}\|\right)\nonumber\\
\leq& L_H\sum_{i\in \mathfrak{N}}\|\hat{z}_{i}\|.
\end{align}
Due to $\gamma_2\geq \frac{L_H\sqrt{2N}}{\sqrt{\lambda_2[\mathcal{L}_2]}}$, we synthesize the conclusions of \eqref{W3}-\eqref{W6} to obtain
\[\dot{V}_2\leq-2^p\alpha_1\lambda_2^{\frac{p+1}{2}}[\mathcal{L}_p]V_2^{\frac{p+1}{2}}
-2^q\beta_1n^{\frac{1-q}{2}}N^{1-q}\lambda_2^{\frac{q+1}{2}}[\mathcal{L}_q]V_2^{\frac{q+1}{2}}.\]
With Lemma \ref{lemma-FXC}, we have that $\varepsilon_{i}$ converge to zero for all $i\in \mathfrak{N}$, i.e. $z_{it}$ converge to $\frac{1}{N}\sum_{j\in \mathfrak{N}}z_{jt} = \frac{1}{N}\sum_{j\in \mathfrak{N}}H_j(t,x_{jt})$, within a fixed-time $T_1$, where
\[T_1=\frac{1}{2^p\alpha_1\lambda_2^{\frac{p+1}{2}}[\mathcal{L}_p](1-p)}+
\frac{ n^{\frac{q-1}{2}}N^{q-1}}{2^q\beta_1\lambda_2^{\frac{q+1}{2}}[\mathcal{L}_q](q-1)}.\]
\end{proof}
From Theorem \ref{the-estimate}, it follows that our proposed estimator \eqref{z_i}-\eqref{zeta} can estimate the global variable $\frac{1}{N}\sum_{j\in \mathfrak{N}}H_j(t,x_{jt})$ within a fixed time $T_1$. Following from this, we can design a distributed protocol to solve Problem \ref{P2}. 

\begin{assumption}\label{A4}
There are constants $L_1,L_2, L_4\in\mathbb{R}_
  {++}$ and $L_3,L_5\in\mathbb{R}_
  {+}$ such that, the objective function $f_i(t,x)$ satisfies the following conditions:
\begin{enumerate}
\item{$f_i(t,x)$ is $L_1$-strongly convex on $x$, $\forall i \in \mathfrak{N}$;}
\item{for all $x,y\in \mathbb{R}^n$, $t\geq 0$, $i,j\in \mathfrak{N}$,  $\left\|\nabla f_i(t,x) - \nabla f_j(t,y)\right\|\leq L_2\|x-y\|+L_3$,
$\left\|\nabla_{xt} f_i(t,x) - \nabla_{xt} f_j(t,y)\right\|\leq L_4\|x-y\|+L_5$ ;}
\item{$H_i(t,x)$ is a positive definite matrix and satisfies $\lambda_{\min}(H_i(t,x))\geq h_d>0$, $\forall x\in \mathbb{R}^n , t\geq 0, i\in \mathfrak{N}$. Further we assume that $\sum_{i\in \mathfrak{N}}H_i(t,x)=\sum_{i\in \mathfrak{N}}H_i(t,y)$, $\forall x,y\in \mathbb{R}^{n}$.}
\end{enumerate}
\end{assumption}

\begin{remark}
 In Assumption \ref{A4} (i) and (ii), we assume that all $f_i(t,x)$ are strongly convex (ensuring the existence and uniqueness of optimal trajectory) and impose specific continuity criteria on $\nabla f_i(t,x)$ and $\nabla_{xt} f_i(t,x)$ (ensuring convergence). Instead of assuming $H_i(t,x)=H_j(t,y),\,\forall i,j\in \mathfrak{N}$ in \cite{2016-rahili,2017-ning}, the Assumption \ref{A4} (iii) requires that the global objective function satisfies $\sum_{i\in \mathfrak{N}}H_i(t,x)=\sum_{i\in \mathfrak{N}}H_i(t,y)$. Clearly, there are a large number of functions that satisfy the criteria of Assumption \ref{A4}, including quadratic functions, trigonometric functions, negative power exponential functions, and some quadratic continuously differentiable functions, etc.
 It is noteworthy that the assumptions about the objective function in \cite{2016-rahili,2017-sun, 2017-ning,2020-li,fix4} are much stronger and are a special case of assumption \ref{A4}.

\end{remark}
\begin{lemma}\label{lemma-H}
  With Assumption \ref{A4}, we can conclude that $\left\|\left(\frac{1}{N}\sum_{i\in \mathfrak{N}}H_i(t,x_{i})\right)^{-1}\right\|\leq \frac{1}{h_d}$.
\end{lemma}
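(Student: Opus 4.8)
The plan is to bound the smallest eigenvalue of the averaged Hessian $\bar{H}(t):=\frac{1}{N}\sum_{i\in\mathfrak{N}}H_i(t,x_i)$ from below and then pass to the inverse. First I would record that, by Assumption \ref{A4}(iii), each $H_i(t,x_i)$ is symmetric (it is a Hessian of a twice continuously differentiable $f_i$) and positive definite with $\lambda_1[H_i(t,x_i)]=\lambda_{\min}(H_i(t,x_i))\geq h_d>0$, so every summand is eligible for the Corollary to Lemma \ref{weyl} (Weyl). Applying the estimate $\lambda_1[A+B]\geq\lambda_1[A]+\lambda_1[B]$ from that corollary inductively over the $N$ matrices $H_1(t,x_1),\dots,H_N(t,x_N)$ yields
\[
\lambda_1\!\left[\sum_{i\in\mathfrak{N}}H_i(t,x_i)\right]\;\geq\;\sum_{i\in\mathfrak{N}}\lambda_1[H_i(t,x_i)]\;\geq\;N h_d,
\]
and hence $\lambda_1[\bar{H}(t)]\geq h_d$.

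Next I would observe that $\bar{H}(t)$, as a positive combination of symmetric positive definite matrices, is itself symmetric positive definite and therefore invertible, and the eigenvalues of $\bar{H}(t)^{-1}$ are exactly the reciprocals of those of $\bar{H}(t)$. Consequently $\lambda_{\max}[\bar{H}(t)^{-1}]=1/\lambda_{\min}[\bar{H}(t)]=1/\lambda_1[\bar{H}(t)]\leq 1/h_d$. Finally, since $\bar{H}(t)^{-1}$ is symmetric, its $2$-norm equals its largest eigenvalue (as recorded in the Notations table, $\|A\|=\lambda_{\max}[A]$), so $\bigl\|\bar{H}(t)^{-1}\bigr\|=\lambda_{\max}[\bar{H}(t)^{-1}]\leq 1/h_d$, which is the claim.

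There is essentially no hard step here; the only points deserving a moment's care are the induction that lifts the two-matrix Weyl corollary to an $N$-fold sum and the use of symmetry of the Hessians so that eigenvalue bounds translate directly into an operator-norm bound. One could alternatively bypass Weyl entirely via the Rayleigh quotient: for any unit vector $v\in\mathbb{R}^n$, $v^T\bar{H}(t)v=\frac1N\sum_{i\in\mathfrak{N}}v^T H_i(t,x_i)v\geq\frac1N\sum_{i\in\mathfrak{N}}h_d=h_d$, so $\lambda_{\min}[\bar{H}(t)]\geq h_d$ directly, after which the passage to $\bar{H}(t)^{-1}$ is the same as above. Either route makes the proof a few lines.
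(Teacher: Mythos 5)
Your proof is correct and follows essentially the same route as the paper: both lower-bound $\lambda_{\min}\bigl[\sum_{i\in\mathfrak{N}}H_i(t,x_i)\bigr]$ by $Nh_d$ via the corollary to Weyl's inequality and then pass to the inverse using $\|A^{-1}\|=1/\lambda_{\min}[A]$ for symmetric positive definite $A$. The Rayleigh-quotient alternative you mention is a valid (and slightly more elementary) shortcut, but it is not needed.
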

\begin{proof}
Based on Assumption \ref{A4} and Lemma \ref{weyl}, we have
\begin{align*}
& \left\|\left(\frac{1}{N}\sum_{i\in \mathfrak{N}}H_i(t,x_i)\right)^{-1}\right\|
= N\left\|\left(\sum_{i\in \mathfrak{N}}H_i(t,x_i)\right)^{-1}\right\|\\
=& N\lambda_{\max}\left(\left(\sum_{i\in \mathfrak{N}}H_i(t,x_i)\right)^{-1}\right)
 =\frac{N}{\lambda_{min}(\sum_{i\in \mathfrak{N}}H_i(t,x_i))}\\
\leq& \frac{N}{\sum_{i\in \mathfrak{N}}\lambda_{min}(H_i(t,x_i))}
\leq  \frac{1}{h_d}.
\end{align*}
\end{proof}
Therefore we propose the following protocol based on a distributed approach:
\begin{align}
u_i^d=& \sum_{j\in \mathfrak{N}}\tilde{a}_{ij}\left[-\alpha_2 sig^{p}(\varepsilon_{ij})-\beta_2sig^{q}(\varepsilon_{ij})-\gamma_3sig(\varepsilon_{ij})\right] \nonumber\\
&-\gamma_4 \nabla f_i(t,x_{it})- z_i^{-1}\nabla_{xt} f_i(t,x_{it}),
\label{u_d}
\end{align}
where $\alpha_2,\beta_2,\gamma_3,\gamma_4\in\mathbb{R}_
  {++}$, $\varepsilon_{ij} = x_{it}-x_{jt}$. Then we give the following theorems proving that all agents' states converge to the optimal trajectory under the protocol \eqref{u_d}.
\begin{theorem}\label{the-consensus}
Under Assumptions \ref{A1}-\ref{A4} and $\gamma_3> \frac{2\gamma_4L_2h_d +2L_4}{h_d\lambda_{2}[\mathcal{L}_{1}]}$, the protocol \eqref{u_d} enables the SMASs \eqref{x_i} to achieve practically fixed-time consensus (Pfxc) in probability and the stochastic settling time satisfies that $t\leq T_1+\tau$, where
\[\tau = \inf \left\{t:\mathbb{E}\left(\frac{1}{2}\sum_{i\in \mathfrak{N}}\left\|x_{it}-\frac{1}{N}\sum_{j\in \mathfrak{N}}x_{jt}\right\|^2\right)\leq \delta\right\}\]
 and
\[\mathbb{E}(\tau)\leq T_2 =\frac{2(m_1/m_2)^{\frac{1-p}{q-p}}}{m_1(1-p)}+ \frac{2(m_1/m_2)^{\frac{1-q}{q-p}}}{m_2(q-1)},\]
with $m_1 = k_1-k_3\delta^{-\frac{p+1}{2}}>0$,
$m_2 = k_2-k_3\delta^{-\frac{q+1}{2}}>0$,
$\delta = \min \left\{\left(\frac{k_3}{(1-\theta)k_1}\right)^{\frac{2}{p+1}},
\left(\frac{k_3}{(1-\theta)k_2}\right)^{\frac{2}{q+1}}\right\}$,
$k_1 = 2^p\alpha_2\lambda_2^{\frac{p+1}{2}}[\mathcal{L}_p]$,
$k_2 = 2^q\beta_2n^{\frac{1-q}{2}}N^{1-q}\lambda_2^{\frac{q+1}{2}}[\mathcal{L}_q]$,
$k_3=\frac{2\gamma_4L_3^2}{L_2}+\frac{2L_5^2}{h_dL_4}+\hat{\sigma}^2$, and $0<\theta<1$ is a design constant.
\end{theorem}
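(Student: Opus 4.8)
The plan is to work on the interval $[T_1,\infty)$ --- where the estimator of Theorem~\ref{the-estimate} has already converged --- reduce the consensus error of \eqref{x_i} to a scalar stochastic Lyapunov inequality of the type treated by Lemma~\ref{lemma-Sfxs}, and then add the lock-in time $T_1$ to its settling-time bound. By Theorem~\ref{the-estimate}, for every $t\ge T_1$ one has $z_{it}=\bar{H}_t:=\frac1N\sum_{j\in\mathfrak{N}}H_j(t,x_{jt})$ for all $i$, and by Lemma~\ref{lemma-H} this common matrix is invertible with $\|\bar{H}_t^{-1}\|\le 1/h_d$, so on $[T_1,\infty)$ the term $z_i^{-1}$ in \eqref{u_d} is the same matrix $\bar{H}_t^{-1}$ for every agent. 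Put $\tilde{x}_{it}=x_{it}-\frac1N\sum_{j\in\mathfrak{N}}x_{jt}$, so that $\sum_{i\in\mathfrak{N}}\tilde{x}_{it}=0$ and $\varepsilon_{ij}=\tilde{x}_{it}-\tilde{x}_{jt}$. Reading the third coupling term in \eqref{u_d} as the linear term $-\gamma_3\varepsilon_{ij}$ (i.e. $sig=sig^1$), using $\tilde{a}_{ij}=\tilde{a}_{ji}$ (Assumption~\ref{A1}) together with Lemma~\ref{lemma-0} (which annihilates the agent-average of the odd coupling terms), and noting that the optimization part of \eqref{u_d} averages cleanly because all $z_i^{-1}$ coincide, the error obeys
\[
d\tilde{x}_{it}=\Big(\sum_{j\in\mathfrak{N}}\tilde{a}_{ij}\big[-\alpha_2\, sig^p(\varepsilon_{ij})-\beta_2\, sig^q(\varepsilon_{ij})-\gamma_3\varepsilon_{ij}\big]-\gamma_4\Delta g_i-\bar{H}_t^{-1}\Delta h_i\Big)dt+\Big(\sigma_i-\tfrac1N\textstyle\sum_{k\in\mathfrak{N}}\sigma_k\Big)dB(t),
\]
where $\Delta g_i=\nabla f_i(t,x_{it})-\tfrac1N\sum_k\nabla f_k(t,x_{kt})$ and $\Delta h_i=\nabla_{xt}f_i(t,x_{it})-\tfrac1N\sum_k\nabla_{xt}f_k(t,x_{kt})$.

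Take $V_3=\tfrac12\sum_{i\in\mathfrak{N}}\tilde{x}_{it}^T\tilde{x}_{it}$ and compute $\mathcal{L}V_3$ from Definition~\ref{def-LV}. The $sig^p$ and $sig^q$ couplings are bounded exactly as the terms $W_3$ and $W_4$ in the proof of Theorem~\ref{the-estimate} (via the inequalities of Notation~\ref{note}), yielding $-k_1V_3^{(p+1)/2}$ and $-k_2V_3^{(q+1)/2}$ with the $k_1,k_2$ in the statement; the linear coupling gives $-\gamma_3\tilde{x}^T(\mathcal{L}_1\otimes I_n)\tilde{x}\le-2\gamma_3\lambda_2[\mathcal{L}_1]V_3$ by Lemma~\ref{lemma-eig} (valid since $\sum_i\tilde{x}_{it}=0$). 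For the two disturbance terms I would invoke Assumption~\ref{A4}(ii) in the form $\|\nabla f_i(t,x_{it})-\nabla f_j(t,x_{jt})\|\le L_2\|\varepsilon_{ij}\|+L_3$ (and likewise with $L_4,L_5$), symmetrize the double sums over $i,j$, use $\sum_{i,j}\|\varepsilon_{ij}\|^2=4NV_3$ and $\|\bar{H}_t^{-1}\|\le1/h_d$, and finally apply Young's inequality with a weight tuned so that each disturbance splits into a quadratic part and a constant; this gives $-\gamma_4\sum_i\tilde{x}_{it}^T\Delta g_i\le 4\gamma_4L_2V_3+c_g$ and $-\sum_i\tilde{x}_{it}^T\bar{H}_t^{-1}\Delta h_i\le\frac{4L_4}{h_d}V_3+c_h$ with $c_g\propto\gamma_4L_3^2/L_2$, $c_h\propto L_5^2/(h_dL_4)$. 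The diffusion (trace) term equals $\tfrac12\sum_i\|\sigma_i-\tfrac1N\sum_k\sigma_k\|_F^2$, a constant controlled by Assumption~\ref{C-A1}. Lumping $c_g$, $c_h$ and this bound into $k_3$,
\[
\mathcal{L}V_3\le-\Big(2\gamma_3\lambda_2[\mathcal{L}_1]-4\gamma_4L_2-\tfrac{4L_4}{h_d}\Big)V_3-k_1V_3^{(p+1)/2}-k_2V_3^{(q+1)/2}+k_3 .
\]

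The hypothesis $\gamma_3>\frac{2\gamma_4L_2h_d+2L_4}{h_d\lambda_2[\mathcal{L}_1]}$ is precisely $2\gamma_3\lambda_2[\mathcal{L}_1]>4\gamma_4L_2+\frac{4L_4}{h_d}$, so the coefficient of $V_3$ is nonnegative and that term can be discarded, leaving $\mathcal{L}V_3\le-k_1V_3^{(p+1)/2}-k_2V_3^{(q+1)/2}+k_3$. Since $\tfrac{p+1}2\in(0,1)$ and $\tfrac{q+1}2\in(1,\infty)$, Lemma~\ref{lemma-Sfxs} applies (its solution-existence requirement being inherited from the hypotheses imposed on \eqref{x_i}) with exponents $p\mapsto\tfrac{p+1}2$, $q\mapsto\tfrac{q+1}2$ and $\triangle=k_3$: the substitution leaves the ratios $\tfrac{1-p}{q-p}$ and $\tfrac{1-q}{q-p}$ unchanged and halves $1-p$ and $q-1$ in the two denominators, which is the origin of the factor $2$ in $T_2$, and it reproduces $\delta$, $m_1=\bar{k}_1$, $m_2=\bar{k}_2$ and $\mathbb{E}(\tau)\le T_2$ exactly as stated. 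Because the reduced inequality holds only for $t\ge T_1$, the consensus error reaches $\{\mathbb{E}(V_3)\le\delta\}$ no later than $T_1+\tau$; this, together with existence of the solution and the post-$\tau$ bound $\mathbb{E}(V_3)\le\delta$ (hence $\mathbb{E}\|x_{it}-\tfrac1N\sum_jx_{jt}\|^2$ small for $t\ge T_1+\tau$), is the asserted Pfxc in probability.

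The main obstacle is the disturbance bookkeeping in the second step: one has to route the state-proportional parts of $\gamma_4\Delta g_i$ and $\bar{H}_t^{-1}\Delta h_i$ into the $-2\gamma_3\lambda_2[\mathcal{L}_1]V_3$ budget and choose the Young weights so that the leftover $L_3$- and $L_5$-inhomogeneities become the constants inside $k_3$ while the quadratic contributions total no more than $4\gamma_4L_2+\tfrac{4L_4}{h_d}$ --- this is exactly what pins down the stated lower bound on $\gamma_3$. A secondary but essential point is that the whole argument lives on $[T_1,\infty)$: it is the fixed-time convergence of the estimator (Theorem~\ref{the-estimate}), together with Assumption~\ref{A4}(iii) and Lemma~\ref{lemma-H}, that allows $z_i^{-1}$ to be replaced by the common, uniformly bounded $\bar{H}_t^{-1}$, which is what makes the mean subtraction in the error dynamics exact and hence the Lyapunov estimate clean.
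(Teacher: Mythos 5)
Your proposal follows essentially the same route as the paper's own proof: lock in the estimator after $T_1$ so that all $z_{it}^{-1}$ equal the common matrix bounded via Lemma \ref{lemma-H}, form the deviation variables $\varepsilon_i=x_{it}-\frac1N\sum_j x_{jt}$ with the average dynamics cleaned up by Lemma \ref{lemma-0}, bound $\mathcal{L}V_3$ term by term (the $sig^p/sig^q$ couplings as in Theorem \ref{the-estimate}, the linear coupling via $\lambda_2[\mathcal{L}_1]$, the gradient-difference terms via Assumption \ref{A4}(ii) and Young's inequality, the trace term via Assumption \ref{C-A1}), absorb the quadratic disturbance contributions into the $\gamma_3$ budget, and invoke Lemma \ref{lemma-Sfxs} with exponents $\frac{p+1}{2},\frac{q+1}{2}$. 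The decomposition, Lyapunov function, constants, and the origin of the factor $2$ in $T_2$ all match the paper's argument.
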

\begin{proof}
It follows from Theorem \ref{the-estimate} that $z_{it}=\frac{1}{N}\sum_{j\in \mathfrak{N}}H_j(t,x_{jt})$ for all $t\geq T_1, \, i\in \mathfrak{N}$. Let $\bar{x}_{t} =\frac{1}{N}\sum_{j\in \mathfrak{N}}x_{jt}$. Then by the SMASs \eqref{x_i}, we have
\begin{align*}
 &d\bar{x}_{t}
 =\frac{1}{N}\sum_{j\in \mathfrak{N}} dx_{jt}\\
=&\left[\frac{1}{N}\sum_{j,k\in \mathfrak{N}}\tilde{a}_{jk} \left(-\alpha_2sig^{p}(\varepsilon_{jk})
-\beta_2sig^{q}(\varepsilon_{jk}) -\gamma_3sig(\varepsilon_{jk})\right)\right.\\
&\left.-\frac{\gamma_4}{N}\sum_{j\in \mathfrak{N}} \nabla f_j(t,x_{jt})- \frac{1}{N}\sum_{j\in \mathfrak{N}}z_{jt}^{-1}\nabla_{xt} f_j(t,x_{jt})\right]dt\\
&+\frac{1}{N}\sum_{j\in \mathfrak{N}}\sigma_j dB(t).
\end{align*}
Then we can easily obtained the following SDE by Lemma \ref{lemma-0},
\begin{align*}
 d\bar{x}_t
 =&\left(-\frac{\gamma_4}{N}\sum_{j\in \mathfrak{N}} \nabla f_j(t,x_{jt})- \frac{1}{N}\sum_{j\in \mathfrak{N}}z_{jt}^{-1}\nabla_{xt} f_j(t,x_{jt})\right)dt\\
&+\frac{1}{N}\sum_{j\in \mathfrak{N}}\sigma_j dB(t).
\end{align*}
Denote $\varepsilon_i = x_{it} - \bar{x}_t$. Then it is not difficult to see that $\sum_{i\in \mathfrak{N}}\varepsilon_i=0$ and $ \varepsilon_i-\varepsilon_j = \varepsilon_{ij} = x_{it}-x_{jt}$. Thus, we can obtain the following error system:
\begin{align*}
 &d\tilde{\varepsilon}_i\\
=& dx_{it} - d\bar{x}_t\\
=& \left[\frac{1}{N}\sum_{j,k\in \mathfrak{N}}\tilde{a}_{ij} \left(-\alpha_2sig^{p}(\varepsilon_{ij})
-\beta_2sig^{q}(\varepsilon_{ij}) -\gamma_3sig(\varepsilon_{ij})\right)\right.\\
&\left.-\gamma_4 \nabla f_i(t,x_{it})- z_{it}^{-1}\nabla_{xt} f_i(t,x_{it})+\frac{\gamma_4}{N}\sum_{j\in \mathfrak{N}} \nabla f_j(t,x_{jt})\right.\\
&\left.+\frac{1}{N}\sum_{j\in \mathfrak{N}}z_{jt}^{-1}\nabla_{xt} f_j(t,x_{jt})\right]dt+\tilde{\sigma}_idB(t).
\end{align*}
where $\tilde{\sigma}_i=\sigma_i -\frac{1}{N}\sum_{j\in \mathfrak{N}}\sigma_j$. For a positive definite function $V_3 = \frac{1}{2}\sum_{i\in \mathfrak{N}}\varepsilon_i^T\varepsilon_i$, by Definition \ref{def-LV}, we have
\begin{align*}
\mathcal{L}V_3
=&
\underbrace{-\alpha_2\sum_{i,j\in \mathfrak{N}}\tilde{a}_{ij}\varepsilon_i^T sig^p(\varepsilon_{ij})}_{W_7}
\underbrace{-\beta_2\sum_{i,j\in \mathfrak{N}}\tilde{a}_{ij}\varepsilon_i^T sig^q(\varepsilon_{ij})}_{W_8}\\
&\underbrace{-\gamma_3\sum_{i,j\in \mathfrak{N}}\tilde{a}_{ij}\varepsilon_i^T sig(\varepsilon_{ij})}_{W_9}\\
&\underbrace{- \frac{\gamma_4}{N}\sum_{i,j\in \mathfrak{N}}\varepsilon_i^T (\nabla f_i(t,x_{it})-\nabla f_j(t,x_{jt}))}_{W_{10}}\\
&\underbrace{-\frac{1}{N}\sum_{i,j\in \mathfrak{N}}\varepsilon_i^T z_{it}^{-1} (\nabla_{xt} f_i(t,x_{it}) -\nabla_{xt} f_j(t,x_{jt}))}_{W_{11}}\\
&+\underbrace{\frac{1}{2}\sum_{i\in \mathfrak{N}} trace[\tilde{\sigma}_i^T\tilde{\sigma}_i]}_{W_{12}}.
\end{align*}
Similar to \eqref{W3}-\eqref{W5}, using the inequalities in Notation \ref{note}, one has
\begin{align}
W_7=&-\alpha_2\sum_{i,j\in \mathfrak{N}}\tilde{a}_{ij}\varepsilon_i^T sig^p(\varepsilon_{ij})
\leq-2^p\alpha_2\lambda_2^{\frac{p+1}{2}}[\mathcal{L}_p]V_3^{\frac{p+1}{2}}\label{W7}\\
W_8=&-\beta_2\sum_{i,j\in \mathfrak{N}}\tilde{a}_{ij}\varepsilon_i^T sig^q(\varepsilon_{ij})\nonumber\\
\leq&-2^q\beta_2n^{\frac{1-q}{2}}N^{1-q}\lambda_2^{\frac{q+1}{2}}[\mathcal{L}_q]V_3^{\frac{q+1}{2}}\label{W8}\\
W_9=&-\gamma_3\sum_{i,j\in \mathfrak{N}}\tilde{a}_{ij}\varepsilon_i^T sig(\varepsilon_{ij})
=-\frac{\gamma_3}{2}\sum_{i,j\in \mathfrak{N}}\tilde{a}_{ij}\varepsilon_{ij}^T sig(\varepsilon_{ij})\nonumber\\
=&-\frac{\gamma_3}{2}\sum_{i,j\in \mathfrak{N}}\tilde{a}_{ij}\|\varepsilon_{ij}\|^2
\leq-\gamma_3\lambda_2[\mathcal{L}_1]\sum_{i\in \mathfrak{N}}\|\varepsilon_{i}\|^2\nonumber\\
=&-2\gamma_3\lambda_2[\mathcal{L}_1]V_3\label{W9}
\end{align}
Let $k_1 = 2^p\alpha_2\lambda_2^{\frac{p+1}{2}}[\mathcal{L}_p]$ and $k_2 = 2^q\beta_2n^{\frac{1-q}{2}}N^{1-q}\lambda_2^{\frac{q+1}{2}}[\mathcal{L}_q]$.
By Assumption \ref{A4} and Lemma \ref{lemma-H}, $W_{10}$, $W_{11}$ can be calculated as follows
\begin{align}
W_{10} =& - \frac{\gamma_4}{N}\sum_{i,j\in \mathfrak{N}}\varepsilon_i^T (\nabla f_i(t,x_{it})-\nabla f_j(t,x_{jt}))\nonumber\\
=& - \frac{\gamma_4}{2N}\sum_{i,j\in \mathfrak{N}}\varepsilon_{ij}^T (\nabla f_i(t,x_{it})-\nabla f_j(t,x_{jt}))\nonumber\\
\leq& \frac{\gamma_4}{2N}\sum_{i,j\in \mathfrak{N}}\|\varepsilon_{ij}^T\|\cdot \left\|\nabla f_i(t,x_{it})-\nabla f_j(t,x_{jt})\right\|\nonumber\\
\leq& \frac{\gamma_4}{2N}\sum_{i,j\in \mathfrak{N}}\|\varepsilon_{ij}\|(L_2\|\tilde{\varepsilon}_{ij}\|+L_3)\nonumber\\
\leq&\frac{\gamma_4L_2}{2N}\sum_{i,j\in \mathfrak{N}}\left(\|\varepsilon_{ij}\|+\frac{L_3}{L_2}\right)^2\nonumber\\
\leq& \frac{\gamma_4L_2}{N}\sum_{i,j\in \mathfrak{N}}\|\varepsilon_{ij}\|^2+\frac{2\gamma_4L_3^2}{L_2}\nonumber\\
=&2\gamma_4 L_1 \sum_{i\in \mathfrak{N}}\|\varepsilon_{i}\|^2+\frac{2\gamma_4L_3^2}{L_2}\nonumber\\
=& 4\gamma_4 L_1V_3+\frac{2\gamma_4L_3^2}{L_2},\label{W10}\\
W_{11} =& -\frac{1}{N}\sum_{i,j\in \mathfrak{N}}\varepsilon_i^T z_{it}^{-1} \left(\nabla_{xt} f_i(t,x_{it}) -\nabla_{xt}f_j(t,x_{jt})\right)\nonumber\\
=& -\frac{1}{2N}\sum_{i,j\in \mathfrak{N}}\varepsilon_{ij}^T z_{it}^{-1} \left(\nabla_{xt}f_i(t,x_{it}) -\nabla_{xt}f_j(t,x_{jt})\right)\nonumber\\
\leq& \frac{1}{2N} \sum_{i,j\in \mathfrak{N}}\|\varepsilon_{ij}\| \cdot \|z_{it}^{-1}\|\cdot \left\|\nabla_{xt} f_i(t,x_{it}) -\nabla_{xt} f_j(t,x_{jt})\right\|\nonumber\\
\leq& \frac{1}{2Nh_d}\sum_{i,j\in \mathfrak{N}}\|\varepsilon_{ij}\|\left(L_4\|\varepsilon_{ij}\|+L_5\right)\nonumber\\
\leq& \frac{L_4}{2Nh_d}\sum_{i,j\in \mathfrak{N}}\left(\|\varepsilon_{ij}\|+\frac{L_5}{L_4}\right)^2\nonumber\\
\leq& \frac{2L_4}{2Nh_d}\sum_{i,j\in \mathfrak{N}}\|\varepsilon_{ij}\|^2+\frac{2L_5^2}{h_dL_4}\nonumber\\
\leq& \frac{2L_4}{h_d}\sum_{i\in \mathfrak{N}}\|\varepsilon_{i}\|^2+\frac{2L_5^2}{h_dL_4}
\leq \frac{4L_4}{h_d}V_3+\frac{2L_5^2}{h_dL_4}.\label{W11}
\end{align}
With \eqref{W9}-\eqref{W11} and $\gamma_3> \frac{2\gamma_4L_2h_d +2L_4}{h_d\lambda_{2}[\mathcal{L}_{1}]}$, we have
$W_9+W_{10}+W_{11}\leq \frac{2\gamma_4L_3^2}{L_2}+\frac{2L_5^2}{h_dL_4}.$
As for $W_{12}$, it follows from Assumption \ref{C-A1} that
\begin{align}\label{W12}
W_{12}=&\frac{1}{2}\sum_{i\in \mathfrak{N}} trace[\tilde{\sigma}_i^T\tilde{\sigma}_i]
=\frac{1}{2}\sum_{i\in \mathfrak{N}}\|\tilde{\sigma}_i\|_F^2\nonumber\\
\leq& \frac{1}{2N^2}\sum_{i,j\in \mathfrak{N}}\|\sigma_i -\sigma_j\|_F^2
 = \bar{\sigma}^2.
\end{align}
Then combining the results of \eqref{W7}-\eqref{W12} gives
\[
\mathcal{L}V_3
\leq-k_1V_3^{\frac{p+1}{2}}-k_2V_3^{\frac{q+1}{2}}+k_3,
\]
where $k_1 = 2^p\alpha_2\lambda_2^{\frac{p+1}{2}}[\mathcal{L}_p]$, $k_2 = 2^q\beta_2n^{\frac{1-q}{2}}N^{1-q}\lambda_2^{\frac{q+1}{2}}[\mathcal{L}_q]$, and $k_3 = \frac{2\gamma_4L_3^2}{L_2}+\frac{2L_5^2}{h_dL_4}+\bar{\sigma}^2$.
With Lemma \ref{lemma-Sfxs}, we have that the solution of the SMASs \eqref{x_i} is Pfxc in probability and the stochastic settling time satisfies $t\leq T_1+\tau$, where
$\tau = \inf \left\{t:\mathbb{E}(V_3)\leq \delta\right\}$
 and
\[\mathbb{E}(\tau)\leq T_2 =\frac{2(m_1/m_2)^{\frac{1-p}{q-p}}}{m_1(1-p)}+ \frac{2(m_1/m_2)^{\frac{1-q}{q-p}}}{m_2(q-1)},\]
with $m_1 = k_1-k_3\delta^{-\frac{p+1}{2}}>0$,
$m_2 = k_2-k_3\delta^{-\frac{q+1}{2}}>0$,
$\delta = \min \left\{\left(\frac{k_3}{(1-\theta)k_1}\right)^{\frac{2}{p+1}},
\left(\frac{k_3}{(1-\theta)k_2}\right)^{\frac{2}{q+1}}\right\}$,
$k_1 = 2^p\alpha_2\lambda_2^{\frac{p+1}{2}}[\mathcal{L}_p]$,
$k_2 = 2^q\beta_2n^{\frac{1-q}{2}}N^{1-q}\lambda_2^{\frac{q+1}{2}}[\mathcal{L}_q]$,
$k_3=\frac{2\gamma_4L_3^2}{L_2}+\frac{2L_5^2}{h_dL_4}+\bar{\sigma}^2$, and $0<\theta<1$ is a design constant.
\end{proof}
Based on the conclusion of Theorem \ref{the-consensus} that all agents achieve Pfxc in probability, further we can get the following theorem.
\begin{theorem}\label{the-optimal}
  Under Assumptions \ref{A1}-\ref{A4} and $\gamma_4 > \frac{4L_4}{h_d L_1}$, the tracking error of the agent with respect to the optimal trajectory $x^*_t$ is MS-GEUB given protocol \eqref{u_d}, i.e.
  \[\mathbb{E}(\frac{1}{N}\sum_{i\in \mathfrak{N}}\|x_{it}-x_t^*\|^2)\leq \mathbb{E}(\frac{1}{N}\sum_{i\in \mathfrak{N}}\|x_{i0}-x_0^*\|^2)\exp\{-k_5t\}+\frac{2k_5}{k_4}\]
  for all $t>0$, where $k_4 =L_1\gamma_4 -4L_4h_d^{-1} >0$, $k_5 = \frac{2L^2_5}{L_4h_d}+\frac{\bar{\sigma}^2}{2}$.
\end{theorem}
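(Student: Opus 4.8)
The plan is to mirror the proof of Theorem~\ref{the-cen}, now in distributed error coordinates and on the interval where the estimator has already converged. \emph{Step 1: the optimal trajectory.} By Lemma~\ref{lemma-min} and Assumption~\ref{A4}, the global cost $\sum_{i\in\mathfrak{N}}f_i(t,\cdot)$ has a unique $C^1$ minimizing trajectory $x_t^*$, characterized by $\sum_{i\in\mathfrak{N}}\nabla f_i(t,x_t^*)=0$. Differentiating this identity in $t$ and using Assumption~\ref{A4}(iii) (so that $\sum_{i}H_i(t,\cdot)$ is a state-independent invertible matrix) gives
\[
dx_t^*=-\Big(\tfrac1N\sum_{i\in\mathfrak{N}}H_i(t,x_t^*)\Big)^{-1}\tfrac1N\sum_{i\in\mathfrak{N}}\nabla_{xt}f_i(t,x_t^*)\,dt.
\]
By Theorem~\ref{the-estimate}, for $t\ge T_1$ every $z_{it}$ equals $\tfrac1N\sum_{j}H_j(t,x_{jt})$, which — once consensus is (approximately) reached via Theorem~\ref{the-consensus} — matches the matrix in the display. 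I would therefore restrict to $t\ge T_1$, set $\tilde x_{it}:=x_{it}-x_t^*$ and $\varepsilon_i:=x_{it}-\bar x_t$ (so that $\varepsilon_i-\varepsilon_j=\varepsilon_{ij}=\tilde x_{it}-\tilde x_{jt}$ and $\sum_i\varepsilon_i=0$), and subtract the two SDEs to obtain the error dynamics of $\tilde x_{it}$.

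\emph{Step 2: apply $\mathcal L$ to a quadratic Lyapunov function.} Take $V=\frac{1}{2N}\sum_{i\in\mathfrak{N}}\tilde x_{it}^{T}\tilde x_{it}$ and compute $\mathcal L V$ by Definition~\ref{def-LV}. The consensus-coupling terms, symmetrised over $i,j$ using $\tilde a_{ij}=\tilde a_{ji}$ (Assumption~\ref{A1}) and the oddness of the $sig$-type maps exactly as in Lemma~\ref{lemma-0} and in \eqref{W7}--\eqref{W9}, reduce to $-\frac{\alpha_2}{2N}\sum_{i,j}\tilde a_{ij}\|\varepsilon_{ij}\|_{p+1}^{p+1}$, $-\frac{\beta_2}{2N}\sum_{i,j}\tilde a_{ij}\|\varepsilon_{ij}\|_{q+1}^{q+1}$ and $-\frac{\gamma_3}{2N}\sum_{i,j}\tilde a_{ij}\|\varepsilon_{ij}\|^2$, all $\le0$. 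The trace term equals $\frac{1}{2N}\sum_i\|\sigma_i\|_F^2\le\frac{\bar\sigma^2}{2}$ by Assumption~\ref{C-A1}. For the gradient term $-\frac{\gamma_4}{N}\sum_i\tilde x_{it}^T\nabla f_i(t,x_{it})$ I would add and subtract $\nabla f_i(t,x_t^*)$, use $L_1$-strong convexity (Assumption~\ref{A4}(i)) to extract the leading dissipative term $-2\gamma_4L_1V$, and observe that the remainder $-\frac{\gamma_4}{N}\sum_i\tilde x_{it}^T\nabla f_i(t,x_t^*)$ equals $-\frac{\gamma_4}{N}\sum_i\varepsilon_i^T\nabla f_i(t,x_t^*)$ since $\sum_i\nabla f_i(t,x_t^*)=0$ and $\sum_i\varepsilon_i=0$; it is controlled through the bound $\|\nabla f_i(t,x_t^*)\|\le L_3$, which follows from Assumption~\ref{A4}(ii) together with $\sum_i\nabla f_i(t,x_t^*)=0$. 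Finally, the $z_{it}^{-1}\nabla_{xt}f_i$ term merged with $dx_t^*$, written as $-z_{it}^{-1}\big(\nabla_{xt}f_i(t,x_{it})-\tfrac1N\sum_j\nabla_{xt}f_j(t,x_t^*)\big)$, is estimated using $\|z_{it}^{-1}\|\le h_d^{-1}$ (Lemma~\ref{lemma-H}) and $\|\nabla_{xt}f_i(t,x)-\nabla_{xt}f_j(t,y)\|\le L_4\|x-y\|+L_5$ (Assumption~\ref{A4}(ii)), followed by the completion-of-square used for $W_{11}$ in \eqref{W11}, which yields $\le\frac{4L_4}{h_d}V+\frac{2L_5^2}{h_dL_4}$ up to a term linear in the $\|\varepsilon_i\|$.

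\emph{Step 3: assemble and conclude.} The $\|\varepsilon_i\|$-remainders of Step~2 are precisely where Theorem~\ref{the-consensus} enters: since $\mathbb E V_3\le\delta$ for $t\ge T_1+T_2$, after taking expectations these remainders are $O(\sqrt\delta)$ and can be absorbed into the ultimate-bound constant; alternatively one keeps (rather than discards) the dissipative term $-\frac{\gamma_3}{2N}\sum_{i,j}\tilde a_{ij}\|\varepsilon_{ij}\|^2\le-\frac{\gamma_3\lambda_2[\mathcal L_1]}{N}\sum_i\|\varepsilon_i\|^2$ from Step~2 and dominates the linear-in-$\|\varepsilon_i\|$ contributions by Young's inequality. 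Choosing $\gamma_4>\frac{4L_4}{h_dL_1}$, so that $k_4=L_1\gamma_4-4L_4h_d^{-1}>0$, one arrives at $\mathcal L V\le-k_4V+k_5$ with $k_5=\frac{2L_5^2}{L_4h_d}+\frac{\bar\sigma^2}{2}$. Lemma~\ref{lemma-bound} then converts this into an exponential-decay-plus-offset estimate for $\mathbb E[V(t)]$, and, since $\frac1N\sum_i\|x_{it}-x_t^*\|^2=2V$, multiplying by $2$ delivers the claimed MS-GEUB bound; the transient on $[0,T_1]$ is absorbed into the exponential prefactor, as is usual for global exponential ultimate boundedness.

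\emph{Main obstacle.} The delicate point is that the differential inequality for $V$ is not self-contained: the cross-term $-\frac{\gamma_4}{N}\sum_i\varepsilon_i^T\nabla f_i(t,x_t^*)$, the $\|\varepsilon_i\|$-part of the $\nabla_{xt}$ estimate, and the small discrepancy between $z_{it}=\tfrac1N\sum_jH_j(t,x_{jt})$ and $\tfrac1N\sum_jH_j(t,x_t^*)$ when consensus is only approximate all couple $V$ to the consensus error $V_3$. Handling this cleanly — either feeding the fixed-time consensus bound of Theorem~\ref{the-consensus} in as an exogenous, ultimately bounded input, or running a joint Lyapunov function in $(V,V_3)$ and exploiting $\frac1N\sum_i\|x_{it}-x_t^*\|^2=\frac2NV_3+\|\bar x_t-x_t^*\|^2$ — is the real content; the remaining estimates parallel those in Theorems~\ref{the-cen} and \ref{the-consensus}.
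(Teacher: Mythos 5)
Your proposal follows the same overall route as the paper's proof: characterize the optimal trajectory via $\sum_{i}\nabla f_i(t,x_t^*)=0$ and differentiate it, form the error system in $\hat{x}_{it}=x_{it}-x_t^*$, take the quadratic Lyapunov function, split $\mathcal{L}V$ into a strong-convexity term and a $z_{it}^{-1}\nabla_{xt}f_i$ term bounded via Lemma \ref{lemma-H} and Assumption \ref{A4}(ii), and close with Lemma \ref{lemma-bound}. Where you genuinely diverge is in the treatment of the consensus coupling. The paper, after invoking Theorem \ref{the-consensus}, simply rewrites the protocol as $u_i^d=-\gamma_4\nabla f_i(t,x_{it})-z_{it}^{-1}\nabla_{xt}f_i(t,x_{it})$ (i.e.\ discards the $sig^p,\,sig^q,\,sig$ terms entirely) and bounds $W_{13}$ through the intermediate step $W_{13}\le\gamma_4\bigl(\sum_i f_i(t,x_{it})-\sum_i f_i(t,x_t^*)\bigr)+W_{13}$, which tacitly requires $\sum_i f_i(t,x_{it})\ge\sum_i f_i(t,x_t^*)$ — true only under exact consensus. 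You instead keep the coupling terms (correctly observing they symmetrize to nonpositive quantities), isolate the cross-term $-\tfrac{\gamma_4}{N}\sum_i\varepsilon_i^T\nabla f_i(t,x_t^*)$ with the bound $\|\nabla f_i(t,x_t^*)\|\le L_3$, and propose to absorb the resulting $\|\varepsilon_i\|$-linear remainders either via the Pfxc bound on $\mathbb{E}V_3$ or via Young's inequality against the retained $-\gamma_3$ dissipation. This is a more honest accounting of the $V$--$V_3$ coupling that the paper elides; the price is that either absorption mechanism enlarges the offset constant, so you would not land exactly on the stated $k_5=\frac{2L_5^2}{L_4h_d}+\frac{\bar\sigma^2}{2}$ and $\frac{2k_5}{k_4}$ but on a perturbed version of them (the paper recovers the clean constants only by the discard just described). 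Your flagged obstacles — the transient on $[0,T_1]$ and the gap between $z_{it}=\tfrac1N\sum_jH_j(t,x_{jt})$ and $\tfrac1N\sum_jH_j(t,x_t^*)$ under only approximate consensus — are real and are not addressed in the paper either (the latter is sidestepped there by Assumption \ref{A4}(iii), which makes $\sum_jH_j(t,\cdot)$ state-independent, a point you could invoke to dispose of that particular discrepancy exactly rather than approximately).
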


\begin{proof}
Following from the results presented in Theorem \ref{the-estimate}-\ref{the-consensus}, $z_{it}=\frac{1}{N}\sum_{j\in \mathfrak{N}}H_j(t,x_{jt})$ for all $t\geq T_1$ and all agents achieve Pfxc in probability with the settling time $t\leq T_1+\tau$, where $\mathbb{E}(\tau)\leq T_2$. Then the protocol can be rewritten as
\[u_i^d = -\gamma_4 \nabla f_i(t,x_{it})- z_{it}^{-1}\nabla_{xt} f_i(t,x_{it}).\]
Following from Lemma \ref{lemma-min}, there exists an optimal solution to the objective function that satisfies the condition
\[x_t^*=\arg\min\left(\sum_{k\in \mathfrak{N}}f_k(t,x_i)\right)
   \Leftrightarrow\sum_{k\in \mathfrak{N}}\nabla f_k(t,x_t^*)=0.\]
Differentiating both sides of the equation on $t$ yields
\[dx_t^* = -\left(\sum_{k\in \mathfrak{N}}H_k(t,x_t^*)\right)^{-1}\left(\sum_{k\in \mathfrak{N}} \nabla_{xt} f_k(t,x_t^*)\right)dt.\]
 Based on the above system, we construct the following error system where $\hat{x}_{it} = x_{it}-x^*_t$,
\begin{align*}
    d\hat{x}_{it} =& \left[u_i^d+\left(\sum_{k\in \mathfrak{N}}H_k(t,x_t^*)\right)^{-1}\left(\frac{1}{N}\sum_{k\in \mathfrak{N}} \nabla_{xt} f_k(t,x_t^*)\right)\right]dt\\
    &+\sigma_idB(t).
\end{align*}
  Based on the above discussion, we consider a positive definite function $V_4 = \frac{1}{2}\sum_{i\in \mathfrak{N}}\hat{x}_{it}^T\hat{x}_{it}$. Due to Assumption \ref{A4}, we have $z_{it}=\sum_{k\in \mathfrak{N}}H_k(t,x_t^*)$, $\forall i\in \mathfrak{N}$, $t\geq T_1$. Then we have
\begin{align*}
& \mathcal{L}V_4\\
   =& \sum_{i\in \mathfrak{N}}\hat{x}_{it}^T\left(-\gamma_4 \nabla f_i(t,x_{it})-z_{it}^{-1}\nabla_{xt} f_i(t,x_{it})\right.\\
  &\left.+\left(\sum_{k\in \mathfrak{N}}H_k(t,x_t^*)\right)^{-1}\left(\frac{1}{N}\sum_{k\in \mathfrak{N}} \nabla_{xt} f_k(t,x_t^*)\right)\right)\\
  &+\frac{1}{2}\sum_{i\in \mathfrak{N}}trace[\sigma_i^T\sigma_i]\nonumber\\
  \leq&\frac{N\bar{\sigma}^2}{2}\underbrace{-\gamma_4\sum_{i\in \mathfrak{N}}\hat{x}_{it}^T\nabla f_i(t,x_{it})}_{W_{13}}\nonumber\\
  &\underbrace{-\sum_{i\in \mathfrak{N}}\hat{x}_i^T z_{it}^{-1}\left( \nabla_{xt} f_i(t,x_{it})-\frac{1}{N}\sum_{k\in \mathfrak{N}} \nabla_{xt} f_k(t,x_t^*)\right)}_{W_{14}}.
  \end{align*}
  By strong convexity of $f_i(t,x)$, one gets
  \begin{align}\label{W13}
    W_{13} =& -\gamma_4\sum_{i\in \mathfrak{N}}\hat{x}_{it}^T\nabla f_i(t,x_{it})\nonumber\\
    \leq&\gamma_4\left(\sum_{i\in \mathfrak{N}}f_i(t,x_{it})-\sum_{i\in \mathfrak{N}}f_i(t,x_t^*)\right)\nonumber\\
    &-\gamma_5\sum_{i\in \mathfrak{N}}\hat{x}_{it}^T\nabla f_i(t,x_{it})\nonumber\\
    =&\gamma_4\sum_{i\in \mathfrak{N}}(f_i(t,x_{it})-f_i(t,x_t^*)-\hat{x}_{it}^T\nabla f_i(t,x_{it}))\nonumber\\
    \leq& -\frac{L_1\gamma_4}{2}\sum_{i\in \mathfrak{N}}\|\hat{x}_{it}\|^2=-L_1\gamma_4V_4.
  \end{align}
By Assumption \ref{A4}, Lemmas \ref{lemma-inequality} and \ref{lemma-H}, $W_{14}$ can be calculated as
  \begin{align}\label{W14}
  & W_{14}\nonumber\\
    =&-\sum_{i\in \mathfrak{N}}\hat{x}_{it}^T z_{it}^{-1}\left(\nabla_{xt} f_i(t,x_{it})-\frac{1}{N}\sum_{k\in \mathfrak{N}}\nabla_{xt} f_k(t,x_t^*)\right)\nonumber\\
    =&-\sum_{i\in \mathfrak{N}}\hat{x}_{it}^T z_{it}^{-1}\cdot\frac{1}{N}\sum_{k\in \mathfrak{N}} \left(\nabla_{xt} f_i(t,x_{it})-\nabla_{xt} f_k(t,x_t^*)\right)\nonumber\\
    \leq&\frac{1}{N}\sum_{i\in \mathfrak{N}}\|\hat{x}_{it}\|\cdot \|z_{it}^{-1}\|\cdot\sum_{k\in \mathfrak{N}} \left\| \nabla_{xt} f_i(t,x_{it})-\nabla_{xt} f_k(t,x_t^*)\right\|\nonumber\\
    \leq&\frac{1}{Nh_d}\sum_{i\in \mathfrak{N}}\|\hat{x}_{it}\|\left(\sum_{k\in \mathfrak{N}} (L_4\|\hat{x}_{it}\|+L_5)\right)\nonumber\\
    =&\frac{L_4}{h_d}\sum_{i\in \mathfrak{N}}\|\hat{x}_{it}\| \left(\|\hat{x}_{it}\|+\frac{L_5}{L_4}\right)
    \leq\frac{L_4}{h_d}\sum_{i\in \mathfrak{N}}\left(\|\hat{x}_{it}\|+\frac{L_5}{L_4}\right)^2\nonumber\\
    \leq&2L_4h_d^{-1}\sum_{i\in \mathfrak{N}}\|\hat{x}_i\|^2+\frac{2NL^2_5}{L_4h_d}\nonumber\\
    =&4L_4h_d^{-1}V_4+\frac{2NL^2_5}{L_4h_d}
  \end{align}
  Combining the results of \eqref{W13}-\eqref{W14} gives
  $\mathcal{L}V_4 \leq-(L_1\gamma_4 -4L_4h_d^{-1})V_4+N\left(\frac{2L^2_5}{L_4h_d}+\frac{\bar{\sigma}^2}{2}\right)$, where $\gamma_4 > \frac{4L_4}{h_d L_1}$. Then following from Lemma \ref{lemma-bound} we can conclude that all agents' states will converge to the nearby of the optimal trajectory and
  \[0\leq \mathbb{E}[V_4]\leq \mathbb{E}[V_4(x_0)]\exp\{-k_5t\}+\frac{Nk_5}{k_4},\]
 where $k_4 =L_1\gamma_4 -4L_4h_d^{-1} >0$, $k_5 = \frac{2L^2_5}{L_4h_d}+\frac{\bar{\sigma}^2}{2}$. Then from the above inequality we get that $E[V_4]$ is MS-GEUB, i.e. \[\mathbb{E}(\frac{1}{N}\sum_{i\in \mathfrak{N}}\|x_{it}-x_t^*\|^2)\leq \mathbb{E}(\frac{1}{N}\sum_{i\in \mathfrak{N}}\|x_{i0}-x_0^*\|^2)\exp\{-k_5t\}+\frac{2k_5}{k_4}\]
 for all $t>0$.
\end{proof}
 \begin{remark}
Theorem \ref{the-optimal} is a stochastic version of Theorem 3.13 in \cite{2016-rahili} and Theorem 2 in \cite{2017-ning}, in which the distributed TV-OPs on MASs is extended to the one on SMASs.
 \end{remark}
\section{Simulations}
\noindent This section will verify the findings above through numerical simulations. We first give the numerical simulation of the centralized protocol \eqref{u_c}.

\begin{example}\label{ex1}
  For $n=2$, let $x_0 = (-5,5)$ and $F(t,x) =\frac{\exp(-t) + 1}{2} (x_1 - \cos(\pi t))^2 + \frac{2\exp(-t) + 1}{2} (x_2 - \sin(\pi t))^2$. Then the centralized optimization problem is
\begin{equation}
\left\{
\begin{array}{l}
\min \mathbb{E}[F(t,x_t)]\nonumber\\
d x_t = u^c dt + \begin{pmatrix}
 0.5 \sin(\pi t)  & 0 \\
 0 & 0.5 \cos(\pi t)
 \end{pmatrix}
 dB(t)
\end{array}
\right.
\end{equation}
According to the requirement for $\gamma_1$ in Theorem \ref{the-cen}, we take $\gamma_1 = 0.7$, then the centralized protocol will be
\[u^c = -0.7\nabla F(t,x_t)-H_F(t,x_t)^{-1}\nabla_{xt} F(t,x_t).\]
The result is displayed in the Figure \ref{fig1}, where the red lines in the first two figures indicate the optimal trajectories $x_{1t}^* =\cos(\pi t)$ and $x_{2t}^* =\sin(\pi t)$, the red line in the last figure shows the upper bound of the mean square tracking error, which can be calculated as 0.625. Observation of the figures shows that the protocol \eqref{u_c} achieves the desired results.
\renewcommand{\dblfloatpagefraction}{.99}
\begin{figure*}[!t]
\centering
  	\includegraphics[width=5.5cm]{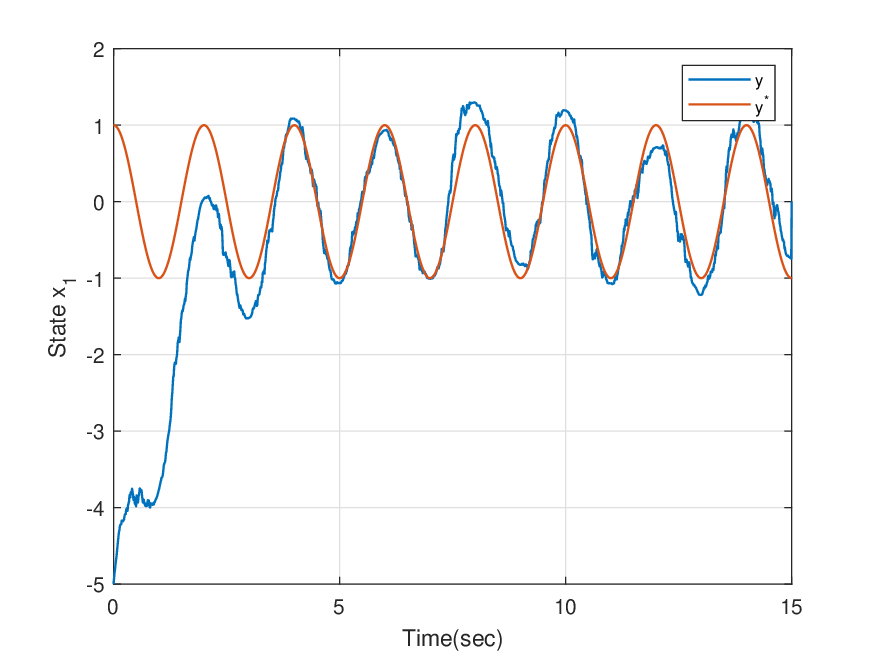}
  	\includegraphics[width=5.5cm]{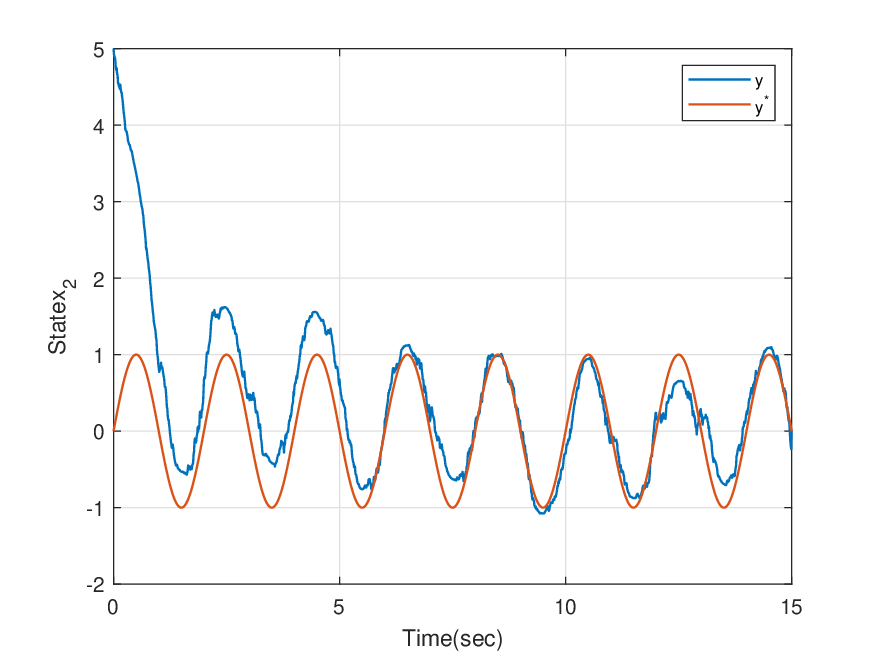}
    \includegraphics[width=5.5cm]{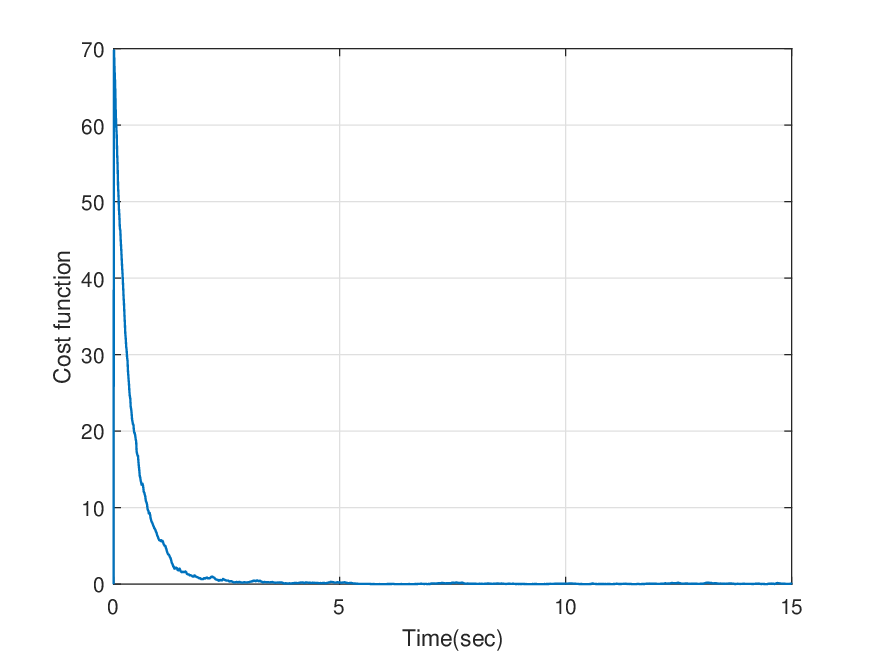}
  	\includegraphics[width=5.5cm]{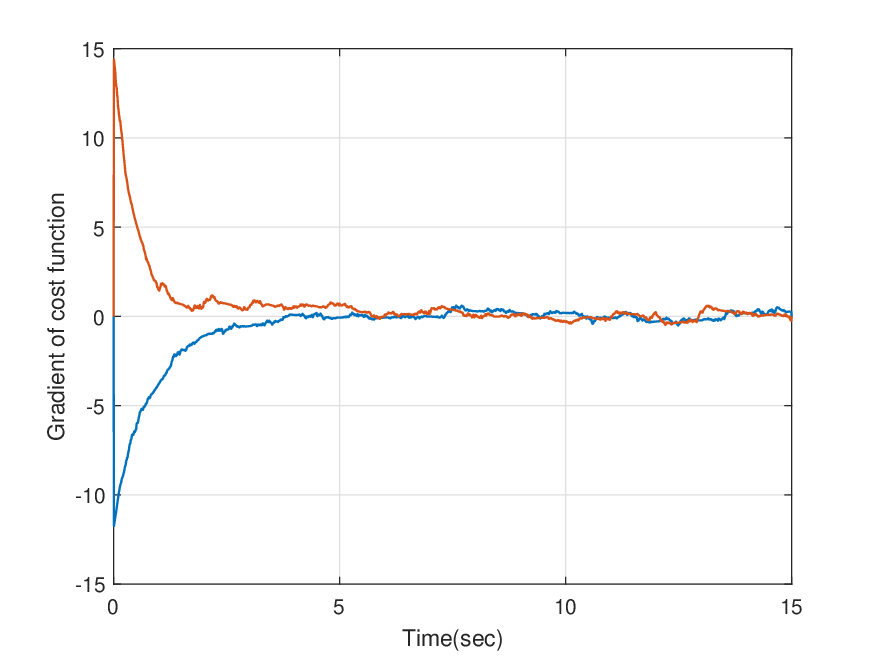}
	\includegraphics[width=5.5cm]{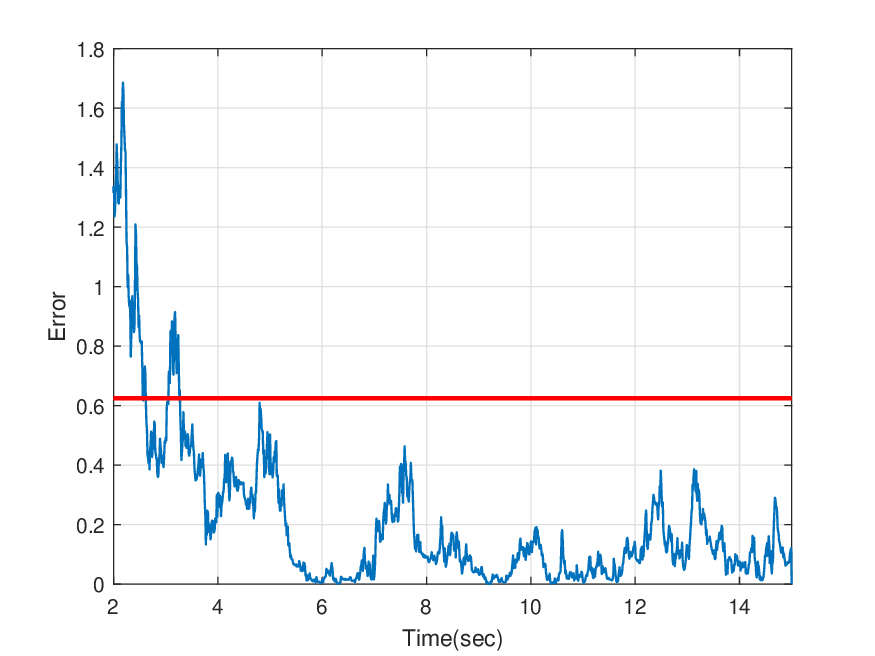}
\caption{\small The Results of Example \ref{ex1}.}
 \label{fig1}
\end{figure*}
\end{example}

Then we give the numerical simulation of the distributed protocol \eqref{u_d}.
\renewcommand{\dblfloatpagefraction}{.99}
\begin{figure*}[!t]
\centering
  	\includegraphics[width=5.5cm]{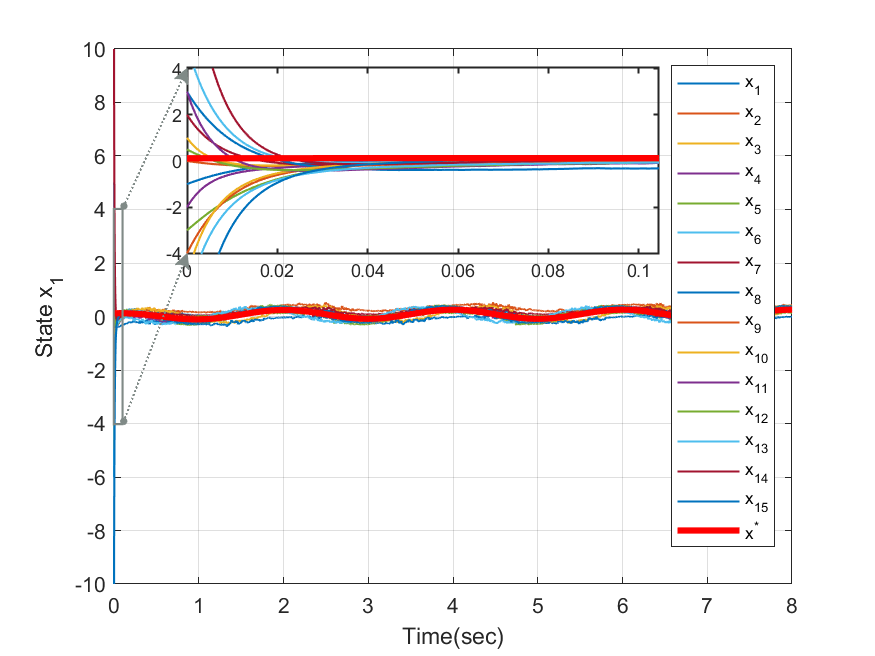}
  	\includegraphics[width=5.5cm]{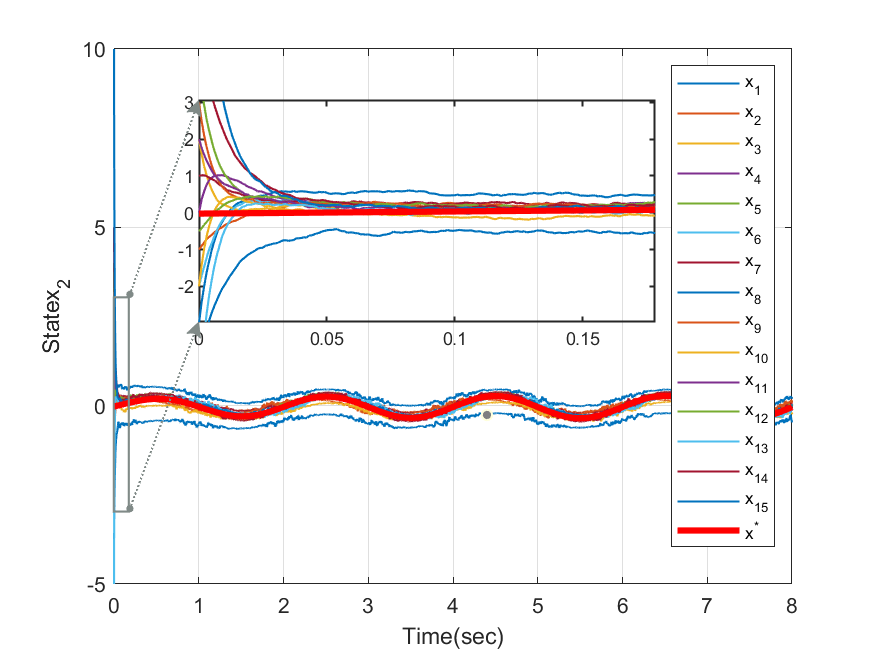}
    \includegraphics[width=5.5cm]{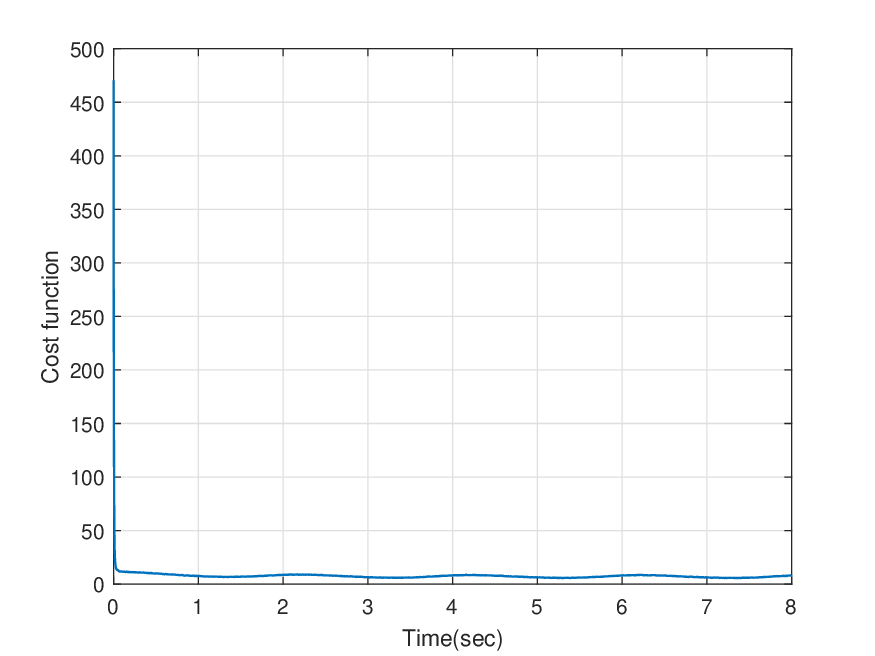}
    \includegraphics[width=5.5cm]{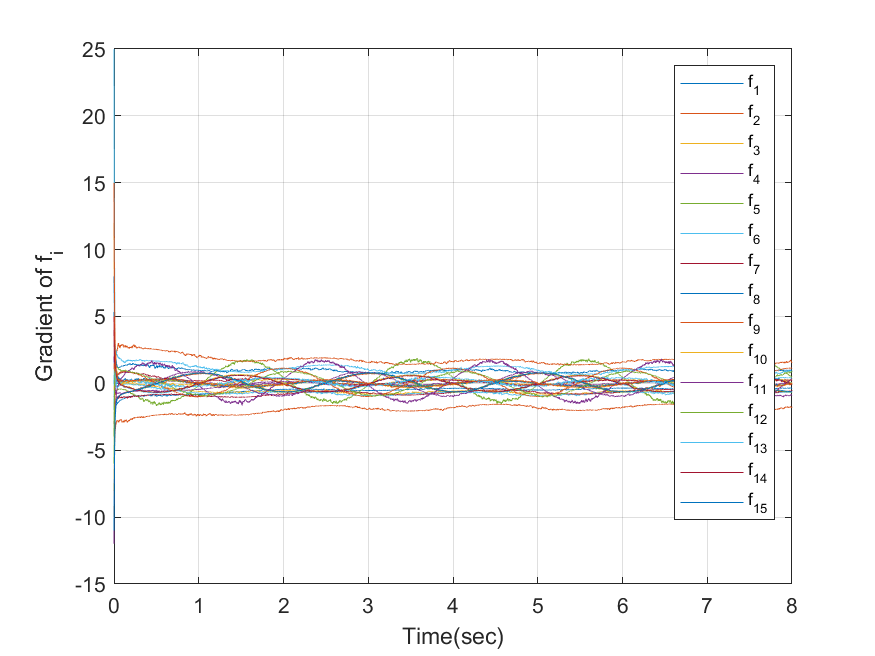}
  	\includegraphics[width=5.5cm]{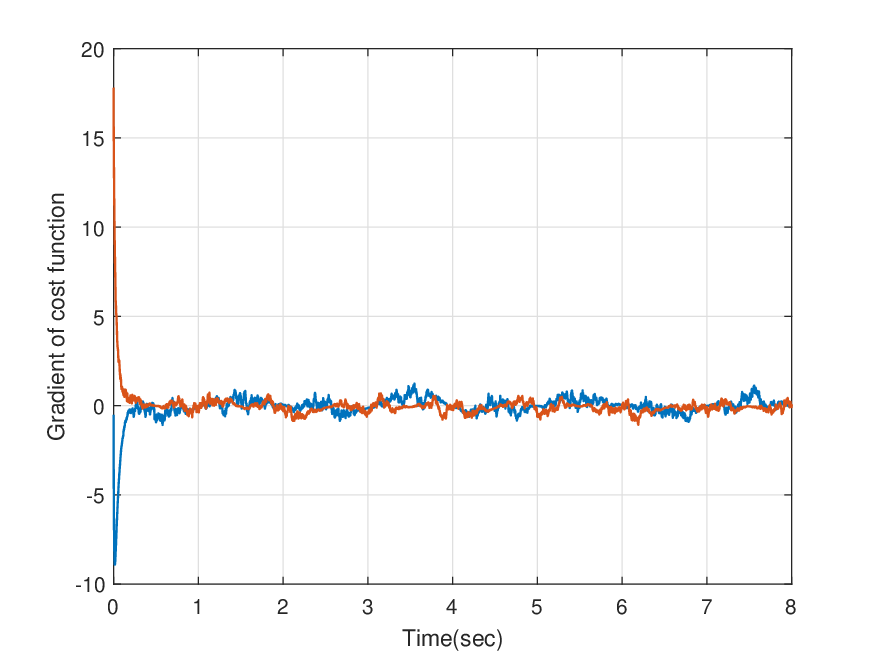}
	\includegraphics[width=5.5cm]{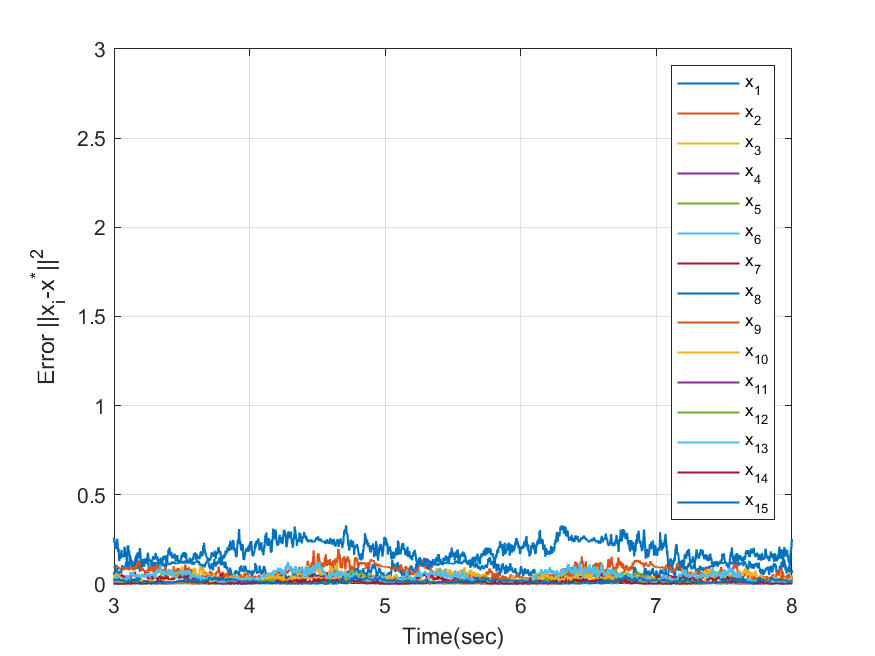}
\caption{\small The Results of Example \ref{ex2}.}
 \label{fig2}
\end{figure*}
\begin{example}  \label{ex2}
  For $n=2$, we illustrate the theoretical results with a simulation experiment with 15 agents. Let  \{(-1,-3), (0,-1), (1,2), (-2,0), (-3,4), (5,-2), (2,1), (3,-4), (-4,3), (-5,-2), (3,2), (0.5,-0.5), (-6,-5), (10,5), (-10,10)\} as the initial states and the following $\mathcal{A}$ as the adjacency matrix, which is directed, detail-balanced and strongly connected,
\renewcommand{\arraystretch}{0.7} 
\setlength{\arraycolsep}{2.5pt}
\setcounter{MaxMatrixCols}{15}
  \[
\mathcal{A} = \left[
\begin{array}{ccccccccccccccc}
0 & 1 & 1 & 0 & 0 & 3 & 0 & 1 & 1 & 0 & 0 & 3 & 0 & 0 & 3 \\
2 & 0 & 1 & 0 & 0 & 0 & 2 & 0 & 1 & 0 & 0 & 0 & 0 & 0 & 0 \\
2 & 2 & 0 & 2 & 0 & 0 & 2 & 1 & 0 & 2 & 0 & 0 & 2 & 0 & 0 \\
0 & 0 & 3 & 0 & 3 & 3 & 0 & 0 & 1 & 0 & 3 & 3 & 0 & 2 & 3 \\
0 & 0 & 0 & 3 & 0 & 3 & 0 & 0 & 0 & 2 & 0 & 3 & 2 & 0 & 0 \\
2 & 0 & 0 & 2 & 2 & 0 & 2 & 0 & 0 & 2 & 3 & 0 & 2 & 2 & 0 \\
0 & 1 & 1 & 0 & 0 & 1 & 0 & 1 & 1 & 0 & 0 & 3 & 0 & 0 & 3 \\
1 & 0 & 1 & 0 & 0 & 0 & 1 & 0 & 1 & 0 & 0 & 0 & 0 & 0 & 0 \\
2 & 2 & 0 & 2 & 0 & 0 & 2 & 2 & 0 & 2 & 0 & 0 & 2 & 0 & 0 \\
0 & 0 & 3 & 0 & 3 & 3 & 0 & 0 & 3 & 0 & 3 & 3 & 0 & 2 & 3 \\
0 & 0 & 0 & 3 & 0 & 3 & 0 & 0 & 0 & 3 & 0 & 3 & 2 & 0 & 0 \\
2 & 0 & 0 & 2 & 2 & 0 & 2 & 0 & 0 & 2 & 2 & 0 & 2 & 2 & 0 \\
0 & 0 & 1 & 0 & 1 & 0 & 0 & 0 & 1 & 0 & 1 & 1 & 0 & 2 & 3 \\
0 & 0 & 0 & 3 & 0 & 3 & 0 & 0 & 0 & 3 & 0 & 3 & 3 & 0 & 3 \\
2 & 0 & 0 & 2 & 2 & 0 & 2 & 0 & 0 & 2 & 2 & 0 & 2 & 2 & 0 \\
\end{array}
\right].
\]
Then Problem \ref{P2} is
\begin{equation}
\left\{
\begin{array}{l}
\min \mathbb{E}\left[\sum_{i\in \mathfrak{N}}f_i(t,x_i)\right]\nonumber\\
d x_{it} = u_i^d dt + \begin{pmatrix}
 0.5\sin(\pi t)  & 0 \\
 0 &  0.5\cos(\pi t)
\end{pmatrix}dB(t),\\
\mbox{subject to} \, \mathbb{E}\|x_{it}-\frac{1}{N}\sum_{j\in \mathfrak{N}}x_{jt}\|^2\leq\delta, \,\forall i\in \mathfrak{N},
\end{array}
\right.
\end{equation}
where $f_i(t,x_i)$ are shown in Table \ref{table} and the protocol is as follows, with parameters satisfying the requirements in Theorems \ref{the-estimate}-\ref{the-optimal},
\begin{align*}
u_i^d=& \sum_{j\in \mathfrak{N}}\tilde{a}_{ij}(-5sig^{0.8}(\varepsilon_{ij})-5sig^{1.2}(\varepsilon_{ij}) -3sig(\varepsilon_{ij}))\\
&- 15\nabla f_i(t,x_{it})- z_{it}^{-1}\nabla_{xt} f_i(t,x_{it})\\
z_{it}=&\zeta_{it}+H_i(t,x_{it}),\\
\dot{\zeta}_{it}=&\sum_{j\in \mathfrak{N}}\tilde{a}_{ij}(-0.5 sig^{0.8}(\hat{z}_{ij}) -0.5sig^{1.2}(\hat{z}_{ij})
-3sign(\hat{z}_{ij})),
\end{align*}
where $\varepsilon_{ij} = x_{it}-x_{jt}$, $\hat{z}_{ij}=z_{it}-z_{jt}$. Furthermore, according to the theoretical results, it can be calculated that $T_1=1.3596s$, $T_2=0.3491s$ with $\theta=0.01$.
The result is displayed in the Figure \ref{fig2}. Based on the objective function given in the Table \ref{table}, we can compute the optimal trajectory as
\begin{align*}
&x_{1t}^* = \frac{2\tanh(t) + 3\cos(\pi t)-0.5}{17.5 + 2e^{-t} + 0.5e^{-2t} + \frac{1}{t+1}}, \\
&x_{2t}^* = \frac{4\sin(\pi t)-0.5}{\frac{73}{6} + 6e^{-t} + \frac{2}{t+2} + \frac{5}{2(t+1)}},
\end{align*}
which are plotted as red curves in the first and second of the Figure \ref{fig2}. Besides, the SMASs have achieved Pfxc in probability, i.e. \[\mathbb{E}\left(\frac{1}{N}\sum_{i\in \mathfrak{N}}\left\|x_{it}-\frac{1}{N}\sum_{j\in \mathfrak{N}}x_{jt}\right\|^2\right)\leq1.51, \,\forall t\geq T_1+\tau.\]
The last figure of Figure \ref{fig2} indicates that the tracking errors of the agents with respect to the optimal trajectory $x^*_t$ satisfy $\lim_{t\rightarrow \infty}\mathbb{E}(\frac{1}{N}\sum_{i\in \mathfrak{N}}\|x_{it}-x_t^*\|^2)\leq 3.94$. Observing the Figure \ref{fig2}, It is observed that the protocol \eqref{u_d} achieves the desired results.
\begin{table}[h]
\caption{The Objective Function of Each Agent for Example \ref{ex2}.\label{table}}
  \begin{center}
  \begin{tabular}{|p{0.5cm}|p{6cm}|} 
  \hline
  $i$ & $f_i(t,x)$ \\
  \hline
  1 & $\frac{2e^{-t}+1}{4}(x_1-1)^2+\frac{e^{-t}+1}{2}(x_2-2)^2$\\
  \hline
  2 & $0.5(x_1-\tanh(t))^2+\frac{t+4}{4t+8}x_2^2$\\
  \hline
  3 & $(x_1-\sin(\pi t))^2+0.5(x_2-\sin (\pi t))^2$\\
  \hline
  4 & $0.5(x_1-\cos(\pi t))^2+0.5(x_2-\sin(\pi t))^2$\\
  \hline
  5 & $0.5(x_1-\cos(\pi t))^2+(e^{-t}+0.5)x_2^2+\sin(\pi t)$\\
  \hline
  6 & $0.5(x_1-e^{-t})^2+\frac{1}{2t+4}x_2^2+e^{-t}$\\
  \hline
  7 & $0.5(x_1-\tanh(t))^2+\frac{1}{2t+2}(x_2-1)^2-\cos(\pi t)$\\
  \hline
  8 & $0.5e^{-t}(x_1+1)^2+0.5e^{-t}(x_2+2)^2$\\
  \hline
  9 & $\frac{t+2}{2t+2}x_1^2+0.5(x_2-\cos (\pi t))$\\
  \hline
  10 & $0.5(x_1-\cos(\pi t))^2+\frac{1}{2t+2}(x_2+1)^2$\\
  \hline
  11 & $0.5(x_1+e^{-t})^2+(e^{-t}+\frac{1}{3})x_2^2$\\
  \hline
  12 & $0.5(x_1+\cos(\pi t))^2+0.5(x_2-\sin(\pi t))^2$\\
  \hline
  13 & $(x_1+\sin(\pi t))^2+0.5(x_2+\cos(\pi t))^2$\\
  \hline
  14 & $(0.25e^{-2t}+1)x_1^2+0.5(x_2-\sin(\pi t))^2$\\
  \hline
  15 & $0.5(x_1-\cos(\pi t))^2+\frac{2t+3}{4t+4}x_2^2$\\
  \hline
  \end{tabular}
  \end{center}
\end{table}
\end{example}

\section{Conclusion}
\noindent In this paper, we present a framework for solving a class of TV-OPs for the SMASs. First, we propose a centralized protocol to deal with the centralized TV-OPs. Subsequently, it is extended to the distributed case, and a distributed protocol is designed to solve this problem under a class of directed graphs. With the protocol parameters satisfying the corresponding conditions, we affirm the efficacy of both protocols through numerical simulations.

The method proposed in this paper is extendable to a wider range of TV-OPs. Future work includes distributed non-convex TV-OPs, TV-OPs with constraints, and event-triggering mechanisms to save communication resources, among others.

\section*{Acknowledgements}
This manuscript was submitted to IEEE Transactions on Cybernetics on January 20, 2025. The authors are grateful to the editors and referees for their valuable comments and suggestions.

\end{document}